\newtheorem{thm}{Theorem}[section]
\newtheorem{lem}[thm]{Lemma}
\newtheorem{defn}[thm]{Definition}
\newtheorem{cor}[thm]{Corollary}
\newtheorem{example}[thm]{Example}
\numberwithin{equation}{section}
\newtheorem{remark}[thm]{Remark}
\newtheorem{note}[thm]{Note}
\newcommand{\mr}[1]{\mathrm{#1}}
\newcommand{\C}{\mathbb{C}}
\newcommand{\R}{\mathbb{R}}
\newcommand{\z}{\mathfrak{z}}
\newcommand{\h}{\mathbb{H}}
\newcommand{\X}{\mathbb{X}}
\newcommand{\Z}{\mathbb{Z}}
\newcommand{\Q}{\mathbb{Q}}
\newcommand{\n}{\mathfrak{n}}
\newcommand{\G}{\Gamma}
\newcommand{\m}{\mathrm{M}}
\newcommand{\E}{\mathcal{E}}
\newcommand{\Cu}{\mathfrak{C}}
\newcommand{\rG}{\mathrm{G}}
\newcommand{\e}{\mathfrak{e}}
\newcommand{\fL}{\mathfrak{L}}
\newcommand{\fg}{\mathfrak{g}}
\newcommand{\fl}{\mathfrak{l}}
\newcommand{\tq}{\widetilde{q}}
\newcommand{\Ii}{\mathrm{i}}
\def\lim{\mathrm{lim}}
\newcommand{\g}{\gamma}
\newcommand{\om}{\omega}
\def\ker{\mathrm{ker}}
\def\cu{{\mathfrak{c}}}
\def\exp{\mathrm{exp}}
\newcommand{\ie}{i.e.\ }
\newcommand{\tmt}[4]{\left({#1\atop #3}{#2\atop #4}\right)}
\newcommand{\cmt}[2]{\left({#1\atop #2}\right)}
\newcommand{\eqr}[1]{\mbox{(\ref{eq:#1})}}
\newcommand{\beq}{\begin{equation}}
\newcommand{\eeq}{\end{equation}}
\newcommand{\mc}[1]{\mathcal{#1}}
\newcommand{\mf}[1]{\mathfrak{#1}}
\begin{document}
\date{\today}
\title[Lifting of Modular Forms]{Lifting of Modular Forms}
\author{Jitendra Bajpai}
\address{Mathematisches Institut, Georg-August Universit\"at G\"ottingen, D-37073 Germany.}
\email{jitendra@math.uni-goettingen.de}
\subjclass[2010]{11F03, 11F55, 30F35}
\keywords{Fuchsian group, Vector-valued modular form, Induced representation}
\maketitle

%%%%%%%%English Abstract %%%%%%%%%%%%%
\begin{abstract}---
The existence and construction of vector-valued modular forms (vvmf) for any arbitrary  Fuchsian group $\rG$, for any representation $\rho:\rG \longrightarrow \mr{GL}_{d}(\C)$ of finite image can be established by lifting scalar-valued modular forms of the finite index subgroup $\ker(\rho)$ of $\rG$. In this article vvmf are explicitly constructed for any admissible multiplier (representation) $\rho$, see section~\ref{af} for the definition of admissible multiplier. In other words, the following question has been partially answered: \emph {For which representations $\rho$ of a given $\rG$, is there a vvmf with at least one nonzero component ?}\\
\vspace{0.5cm}

\noindent{R{\tiny{\'ESUM\'E}}\,.\,\,}--- L'existence et construction de formes modulaires vectorielles (vvmf) pour un groupe Fuchsien arbitraire $G$ et pour une repr\'esentation  $\rho : \rG \longrightarrow \mr{GL}_{d}(\C)$ d'image finie peut \^etre \'etablie en relevant des formes modulaires scalaires pour le sous-groupe d'indice fini $\ker (\rho)$ de $G$. Dans cet article, des vvmf sont  explicitement construites pour tout multiplicateur admissible (repr\'esentation) $\rho$ (voir paragraphe~\ref{af} pour la d\'efinition du multiplicateur admissible). En d'autres termes, on a partiellement r\'epondu \`a la question suivante: \emph {Pour quelles repr\'esentations $\rho$ d'un groupe $G$ donn\'e, existe-t-il une vvmf avec au moins une composante non nulle?}

%L'existence et construction de formes  modulaire vecteur-valeur (vvmf) pour tout  groupe  Fuchsian arbitraire G, pour tout repr\'esentation image finie  $\rho : \rG \longrightarrow \mr{GL}_{d}(\C)$ peut \^etre \'etablie en soulevant des formes modulaires scalaires-valleurs
%du sous-groupe de l'indice fini $\ker (\rho)$ de G. Dans cet article, vvmf est explicitement construite pour tout multiplicateur admissible (repr\'esentation) $\rho$, voir la section~\ref{af} pour la d\'efinition du multiplicateur admissible. En d'autres termes, on a r\'epondu en partie \`a la question suivante: \emph {Pour quelles repr\'esentations $\rho$ d'un G donn\'e, existe-t-il une vvmf avec au moins une composante non nulle?}
%\'evalu\'ees 

 \end{abstract}

%%%%%%%%%  INTRODUCTION  %%%%%%%%%%%%%%%%%

\section{Introduction}
Scalar-valued modular forms and their generalizations are one of the central concepts in number theory. Why is the notion of vector-valued modular forms one of the natural generalizations of scalar-valued modular forms? History of modern mathematics answers this question naturally. All of the most famous modular forms have a multiplier, for example: $$\eta\left(\frac{a\tau+b}{c\tau+d}\right)=\sqrt{c\tau+d} \cdot \rho \left({a \atop c}{b \atop d} \right)\cdot \eta(\tau) \qquad \mr{for} \quad  \left( {a\atop c} {b \atop d}\right) \in \mathrm{SL}_2(\Z), \quad \tau \in \h .$$ Here $\h=\{ z=x+\Ii y \in \C \ | \ y >0 \}$, denotes the upper half plane and $\eta(\tau)=q^{1/24}\Pi_{n=1}^{\infty}(1-q^{n})$ is the Dedekind eta function with $q=e^{2\pi \Ii \tau}$. In this case the multiplier $\rho$ is a $1$-dimensional representation of the double cover of SL$_2(\Z)$. These examples suggest having multipliers $\rho$ of dimension $d\geq 1$ and the corresponding modular forms are called {\bf{\emph{vector-valued modular forms (vvmf)}}}. 

\par In the 1960's, Selberg~\cite{Selberg} called for a theory of vvmf, as a way to study the noncongruence scalar-valued modular forms as these look intractable by the methods available in the theory of scalar-valued modular forms. In the 1980's, Eichler-Zagier~\cite{EZ} explained how Jacobi forms and Siegel modular forms for $\mr{Sp}(4)$ can be reduced to vvmf. Since then the theory has been in demand to be developed. The work of Borcherds and the rise of the string theory in physics have been major catalyst in the development of the theory. This theory of vvmf has applications in various fields of mathematics and physics such as vertex operator algebra, conformal field theory, Borcherds-Kac-Moody algebras, etc. In Zwegers' work~\cite{Zwegers02} on Ramanujan's mock theta functions, vvmf have played an important role to make them well fit in the world of modular forms. There are plenty of vvmf in ``nature". For instance the characters of a rational conformal field theory (RCFT) form a vvmf of weight zero, see~\cite{CIZ87,MMS88}. The Borcherds lift associates vvmf for a Weil representation to automorphic forms on orthogonal groups with infinite product expansions, which can arise as denominator identities in Borcherds-Kac-Moody algebras, see~\cite{Bruinier2002, Bruinier2014}.

\par In terms of developing the theory of vvmf, some efforts have been made to lift to vvmf, classical results  like dimension formulas and growth estimates of Fourier coefficients of vvmf of the modular group. For example we refer \cite{BG, KM2, Knopp4, Marks1, Marks3, M4} to mention a few of these efforts and~\cite{CF,Gannon1} for the current state of the art. However, this article is mainly concerned with Fuchsian groups of the first kind and looks further than the modular group regarding explicit construction of vvmf. The existence of vvmf for any Fuchsian groups of the first kind with respect to any multiplier has been discussed in~\cite{Sebbar}. The classification of vvmf for any genus zero Fuchsian groups of the first kind and a method to construct vvmf for triangle groups have been established by the author in his doctoral dissertation~\cite{BajpaiThesis}. 

\par More precisely, this article will show that a vvmf $\X(\tau)$ of a finite index subgroup $\mr{H}$ of any Fuchsian group of the first kind $\rG$ can be lifted to one of the vvmf $\widetilde{\X}(\tau)$ of $\rG$ by inducing the multiplier. Similarly a vvmf $\X(\tau)$ of $\rG$ can be restricted to one of the vvmf $\overline{\X(\tau)}$ of any of the finite index subgroup $\mr{H}$ by reducing the multiplier. However, lifting of a vvmf increases the rank of vvmf by the factor equal to the index of $\mr{H}$ in $\rG$ whereas the restriction does not affect the rank of vvmf.

\par These arguments give an easy construction of vvmf of any finite index subgroup $\mr{H}$ of $\rG$. The lifting argument can also be used to verify the existence of scalar-valued noncongruence modular forms. Usually, for any multiplier $\rho: \G(1) \rightarrow \mr{GL}_d(\C)$, $\ker \rho$ will be a noncongruence subgroup of  $\G(1)$.  Since all the components of vvmf of $\rG$ are scalar-valued modular forms of $\ker\rho$, this gives a different approach and direction to develop the theory of scalar-valued noncongruence modular forms of $\G(1)$ and hence some hope to contribute substantially in the development of the long standing Atkin-Swinnerton-Dyer conjecture about the unbounded denominator (ubd) property of modular forms of $\G(1)$. For original account of this problem, see~\cite{ASD}. To this date there are many advances have been witnessed to resolve and address the ubd property of noncongruence modular forms. For example~\cite{KL2, KL1, LiLong, Scholl2} and more recently by Franc and Mason in~\cite{FM1,FM2}. 

\par One of the advantages of vvmf is that (unlike scalar-valued modular forms) it is closed under inducing. For example $\theta_2(\tau)$ and $\eta(\tau)$ are scalar-valued modular forms of weight $1/2$ of $\G(2)$. However, $\theta_2(\tau)$ is not a scalar-valued modular form of $\G(1)$, but their lifts $\widetilde{\theta}_2(\tau),\ \widetilde{\eta}(\tau)$ are vvmf of $\G(1)$ with respect to the rank six multiplier $\widetilde{1}=\mr{Ind}_{_{\G(2)}}^{^{\G(1)}}(1)$. It is customary to denote the space of weight $w$ scalar-valued weakly holomorphic and holomorphic modular forms of group $\Gamma$ by $\m_{w}^{!}(\G):=\m^{!}_{w}(\G,1)$ and $\m_{w}(\G):=\m_{w}(\G,1)$ respectively. Throughout the article for any matrix $A$ of order $m\times n$, $A^{\mf{t}}$ denotes the transpose of $A$.

\par Throughout this article, we work with even integer weights - the same construction works for fractional weights but extra technicalities obscure the underlying ideas. It is also shown below that the spaces $\m^{!}_{w}(\G(1), \widetilde{1})$ and $\m^{!}_{w}(\G(1),1)$ are naturally isomorphic modules over the ring $\m^{{!}}_{0}(\G(1),1)$. More importantly, we have achieved $\m^{{!}}_{0}(\rG,1)$-module isomorphism between $\m^{!}_{w}(\rho)$ and $\m^{!}_{w}(\widetilde{\rho})$, see Theorem~\ref{induction}. In addition, for any admissible multiplier $\rho : \mr{H}\rightarrow \mr{GL}_d(\C)$ we prove the admissibility of $\widetilde{\rho}=\mr{Ind}_{_{\mr{H}}}^{^{\rG}}(\rho)$ in Theorem~\ref{adm-ind}. This construction was helpful in showing the existence of vvmf of any Fuchsian group $\rG$ of the first kind  for any finite image admissible multiplier $\rho$ and it is established in Theorem~\ref{thm:existence-for-fuchsian}. Among these results, Lemma~\ref{corep} explains a beautiful relation between the cusps of H and its index in G.

\par In the following section we review the theory of Fuchsian groups. There is vast literature available on Fuchsian groups and therefore nothing original is guaranteed in this section. For detailed exposition, see~\cite{Beardon, SK1,Shimura1, Venkov}. 
%%%%%%%%%%%%%%% Section 2 - Fuchsian Groups %%%%%%%%%%

\section{Fuchsian Groups}
The study of Fuchsian groups begins by looking at the discrete group of motions of the upper half plane $\h$ in the complex plane $\C$ equipped with the Poincar\'e metric $ds^2=\frac{dx^2+dy^2}{y^2}$. The group of all orientation-preserving isometries of $\h$ for this metric coincides with the group PSL$_2(\R)=\mr{SL}_2(\R)/\{\pm I \},$ where $\mr{SL}_2(\R)=\big\{\tmt{a}{b}{c}{d} \ \big| \ a,b,c,d \in \R, ad-bc=1\big\}.$ Roughly speaking, a Fuchsian group is a discrete subgroup $\rG$ of PSL$_2(\R)$ for which $\rG\backslash \h$ is topologically a Riemann surface with finitely many punctures. 
The action of any subgroup of SL$_2(\R)$  on $\h$ is the M\"obius action $\left({a \atop c}{b \atop d}\right) \cdot \tau = \frac{a\tau+b}{c\tau+d}.$ Define $\h^{*}=\h \cup \R \cup \{\infty \}$ to be the extended upper half plane of $\mr{PSL}_2(\R)$ and this action can easily be extended to $\h^{*}$. The action of any $\g=\pm\tmt{a}{b}{c}{d} \in \mr{PSL}_2(\R)$,  on any $x \in \R \cup \{\infty\}$ is defined by $\g \cdot x =\mr{lim}_{\tau \mapsto x} \frac{a\tau+b}{c\tau+d} \in \R \cup \{ \infty\}.$
 
The elements of $\mr{PSL}_2(\R)$ can be divided into three classes: elliptic, parabolic and hyperbolic elements. An element $\g \in \mr{PSL}_2(\R)$ is elliptic, parabolic or hyperbolic, if the absolute value of the trace of $\g$ is respectively less than, equal to or greater than two.
Note that $\mr{PSL}_2(\R)$ fixes $\R \cup \{ \infty \}$ and, in $\h^{*}$ there is only one notion of $\infty$ usually denoted by $\Ii \infty$ but for notational convenience it will be written $\infty$. For any $x \in \R$ it is observed that there exists an element $\g=\pm\tmt{x}{-1}{1}{0}$ such that $\g \cdot \infty=x$ which means PSL$_2(\R)$ acts transitively on $\R \cup \{ \infty \}$. For any $x\in \R$ such $\g$ is denoted by $A_{_{x}}$.

\begin{defn}\rm
Let $\rG$ be a subgroup of $\mr{PSL}_2(\R)$. A point $\tau \in \h$ is called an elliptic fixed point of $\rG$ if it is fixed by some nontrivial elliptic element of $\rG$, and $\cu \in \R \cup \{\infty\}$ is called a cusp (respectively hyperbolic fixed point) of $\rG$ if it is fixed by some nontrivial parabolic (respectively hyperbolic) element  of $\rG$. Moreover, $\E_{_\rG}$ and $\Cu_{_\rG}$ denote the set of all elliptic fixed points and cusps of $\rG$ and define $\h_{_\rG}^{*}=\h \cup \Cu_{_\rG}$ to be the extended upper half plane of $\rG$.
\end{defn}

For example, if $\rG=\mr{PSL}_2(\R)$ then $\Cu_{_\rG}=\R \cup \{ \infty \}$, $\E_{_\rG}=\h$ and
if $\rG=\mr{PSL}_2(\Z)$ then $\Cu_{_\rG}=\Q \cup \{ \infty \}$,  $\E_{_\rG}=\rG\cdot \Ii \cup \rG\cdot \om$, where $\om=\frac{1+\Ii \sqrt{3}}{2}$. For any $\tau \in \h^{*}_{_\rG}$, let $\rG_{\tau}=\{ \g \in \rG | \g \cdot \tau =\tau \}$ be the stabilizer subgroup of $\tau$ in G. For each $\tau=x+\Ii y \in \h$, $\rG_{\tau}$ is a cyclic subgroup of G of finite order generated by $\g_{\tau}  = A_{\tau} K_{m} A_{\tau}^{-1}$, where $m=m(\tau)$ is the unique positive integer called the order of $\tau$, $A_{\tau}=\frac{1}{\sqrt{y}}\tmt{y}{x}{0}{1}$ such that $A_{\tau}(\Ii)=\tau$ and $K_{m}=\pm \tmt{\ \ \mr{cos}(\frac{\pi}{m})}{\ \mr{sin}(\frac{\pi}{m})}{-\mr{sin}(\frac{\pi}{m})}{\ \mr{cos}(\frac{\pi}{m})}$. For any $\cu \in \Cu_{_\rG} $, $\rG_{\cu}$ is an infinite order cyclic subgroup of G.  If $\cu =\infty$ then $\rG_{\infty}$ is generated by $\g_{\infty}  = \pm\left({1 \atop 0}{h_{_{\infty}} \atop 1}\right)=t^{h_{_{\infty}}}$ for a unique nonzero positive real number $h_{_{\infty}}$, called the cusp width of the cusp $\infty$, where we write $t=\pm \tmt{1}{1}{0}{1}$.  In case of $\cu \neq \infty$, $\rG_{\cu}$ is generated by $\g_{\cu}  = A_{\cu} t^{h_\cu} A_{\cu}^{-1}$ for some smallest nonzero positive real number $h_{\cu}$\,, called the cusp width of the cusp $\cu$ such that $\g_{\cu} \in \rG$ where $A_{\cu}= \pm\left({\cu \atop 1}{-1 \atop \ 0}\right) \in \mr{PSL}_2(\R)$ so that $A_{\cu}(\infty)=\cu$, as defined above. From now on for  convenience $h_{_{\infty}}$ will be denoted by $h$.

\subsection{Fuchsian groups of the first kind}  
The class of all Fuchsian groups is divided into two categories, namely Fuchsian groups of the first and of the second kind depending on the hyperbolic area of their fundamental domain. The fundamental domain, denoted by $\mr{F}_{_\rG}$, exists for any discrete group $\rG$ acting on $\h$. It is a connected open set $\mr{F}_{_\rG}$ in $\h$ in which no two elements of $\mr{F}_{_\rG}$ are equivalent with respect to $\rG$, and any point in $\h$ is equivalent to a point in the closure of $\mr{F}_{_\rG}$ with respect to $\rG$ \ie any $\rG$-orbit in $\h$ intersects with the closure of $\mr{F}_{_\rG}$. The hyperbolic area of $\mr{F}_{_\rG}$ may be finite or infinite. When $\mr{F}_{_\rG}$ has finite area then such $\rG$ is a Fuchsian group of the first kind otherwise of the second kind. For example $\rG=\big\langle \pm \tmt{1}{1}{0}{1}\big\rangle$ is the simplest example of a Fuchsian group of the second kind. 
\par  A Fuchsian group $\rG$ will have several different fundamental domains but this can be observed that their area will always be the same. From $\mr{F}_{_\rG}$ a (topological) surface $\Sigma_{_\rG}$ is obtained by identifying the closure $\widehat{\mr{F}}_{_\rG}$ of $\mr{F}_{_\rG}$ using the action of $\rG$ on $\widehat{\mr{F}}_{_\rG}$, \ie $\Sigma_{_\rG}= \widehat{\mr{F}}_{_\rG} / {\sim}$ (equivalently $\Sigma_{_\rG}= \rG \backslash \h^{*}_{_\rG}$). In fact $\Sigma_{_{\rG}}$ can be given a complex structure, for details see chapter 1 of~\cite{Shimura1}.   The surface $\Sigma_{_\rG}$ has genus-$\mathfrak{g}$ where as surface $\rG\backslash \h$ is of genus-$\mathfrak{g}$ with finitely many punctures. Due to Fricke, any Fuchsian group  $\rG$ of the first kind is finitely generated. In fact,  
$$\rG= \big\langle a_{i}, b_{i}, r_{j}, \g_{k}\ \big|\ \Pi_{i=1}^{\fg}[a_{i}, b_{i}] \cdot \Pi_{j=1}^{\fl} r_j \cdot \Pi_{k=1}^{\n} \g_{k} =1\,,\ r_{j}^{m_{j}}=1 \big\rangle $$

where $1\leq i \leq \fg, 1\leq j \leq \fl, 1\leq k \leq \n$ and $[a, b]=a b a^{-1} b^{-1}$. The elements $a_{i}, b_{i}$ are the generators of the stabilizer group of the $2\fg$ orbits of hyperbolic fixed points, each $r_{j}$ is the generator of the stabilizer group of $\fl$ orbits of elliptic fixed points, each $\g_{k}$ is the generator of the stabilizer group of $\n$ orbits of cusps of $\rG$ and for $j, \ m_{j}\in \Z_{\geq 2}\ $ denotes the order of elliptic element $r_j$.
\par The set of numbers $(\fg;m_1, \ldots, m_\fl; \n)$ is called the \emph{\textbf{signature}} of $\rG$. For example, the signature of $\G(1)$, $\G_{0}(2)$ and $\G(2)$ are $(0;2,3;1)$, $(0;2;2)$ and  $(0;\_ ;3)$ respectively, where `\_' represents the nonexistence of any nontrivial elliptic element in $\G(2)$. By using the Gauss-Bonnet formula, the area of any $\mr{F}_{_\rG}$ can be computed  in terms of its signature. Namely, $$\mr{Area}(\mr{F}_{_\rG})=2\pi \bigg[2\fg-2+\sum_{j=1}^{\fl}\bigg(1-\frac{1}{m_j}\bigg)+\n \bigg].$$ 
With respect to a set of generators of $\rG$, $\mr{F}_{_\rG}$ can be chosen to be the interior of a convex polygon bounded by $(4\fg+2\fl +2\n-2)$ geodesics, the sides of which are pairwise identified under the action of the generators of $\rG$. For $\rG=\G(1)$ the polygon is bounded by 4 sides, which can be seen in figure~\ref{fig:1}, although the sides $(\omega, \Ii)$ and $(\Ii, -\bar{\om})$ lie on the same geodesic. The group $\rG$ is called a co-compact group if $\n=0$. In addition if $\fl=0$ then the group $\rG$ is called a strictly hyperbolic group and in this case $\Sigma_{_\rG}$ is a compact Riemann surface of genus-$\fg$. 
In general, $\widehat{\mr{F}}_{_\rG}$ has exactly $\n$-vertices on $\R \cup \{\infty\}$. These vertices correspond to the inequivalent cusps of $\rG$.
\begin{figure}
\centering
\begin{tikzpicture}[scale=1.5]
\draw (0.5, 0.866) node[right]{$-\bar{\omega}$} arc [radius=1, start angle=60, end angle=120] node[left]{$\omega$}; \draw (-1.5,0)--(-0.5,0) node[below]{$-\frac{1}{2}$}; \draw (-0.5,0)--(0,0) node[below]{$0$}; \draw (0,0)--(0.5,0) node[below]{$\frac{1}{2}$}; \draw (0.5,0)--(1.5,0);
 \draw[dashed] (0, 1) node[above right]{$\Ii$} -- (0, 2); \draw (-0.5, 0.866)--(-0.5, 2); \draw (0.5, 0.866)--(0.5, 2);\end{tikzpicture}
\caption{Fundamental domain of $\G(1)$. All 4 geodesics can be described as follows: straight lines $\om$ to $\infty$ and $-\bar{\om}$ to $\infty$ contribute to two geodesics and the arcs $\om$ to $\Ii$ and $\Ii$ to $-\bar{\om}$ contribute to the other two geodesics, here $\om=\frac{-1+\Ii \sqrt{3}}{2}$.}\label{fig:1}
\end{figure}
\par One of the basic properties of Fuchsian groups of the first kind is that their action on $\h$ gives rise to a genus-$\mathfrak{g}$ surface $\Sigma_{_\rG}$ of finite area which give the Riemann surface of genus-$\mathfrak{g}$ with finitely many special points\,. These special points correspond to the $\rG$-orbits of elliptic fixed points and cusps of $\rG$. Let G be a such Fuchsian group of the first kind. Let $\widehat{\E}_{_\rG}:=\{\e_{j} \in \h \ | \ 1\leq j \leq \fl \}$ be a set of all inequivalent elliptic fixed points of G where for every $j, \e_{j}$'s are representatives of distinct orbits of elliptic fixed points of G with respect to its action on $\h$ and $\widehat{\Cu}_{_\rG}:=\{\cu_{k} \in \R \cup \{\infty\}\ |\ 1\leq k \leq \n \}$ be a set of all inequivalent cusps of G where for every $k, \cu_{k}$'s are representatives of the distinct orbits of cusps of G with respect to its action on $\R\cup \{\infty\}$\,. For example, if $\rG=\G(1),$ then $\widehat{\Cu}_{_\rG}=\{ \infty \}$ and $\widehat{\E}_{_\rG}=\{\Ii, \frac{1+\Ii\sqrt{3}}{2}\}$, if $\rG=\G_0(2)$, then $\widehat{\Cu}_{_\rG}=\{ 0,\infty \}$ and $\widehat{\E}_{_\rG}=\{\frac{1+\Ii}{2}\}$, if $\rG=\G(2)$, then $\widehat{\Cu}_{_\rG}=\{ 0,1,\infty \}$ and $\widehat{\E}_{_\rG}=\phi$. Consequently, $\rG\backslash \h_{_{\rG}}^{*}-(\widehat{\E}_{_\rG} \cup \widehat{\Cu}_{_\rG})$ is a Riemann surface with $\fl+\n$ punctures. 
%%%%%%%%%%%% Section 3 - Vector-valued modular forms %%%%%%%%%%%
\section{Vector-valued modular forms}\label{af}
Let $\rG$ denote a Fuchsian group of the first kind with a cusp at $\infty$, unless otherwise mentioned. More precisely, as long as $\rG$ has at least one cusp then that cusp can (and will) be moved to $\infty$ without changing anything, simply by conjugating the group by the matrix $A_{_{\cu}}=\pm\tmt{\cu}{-1}{0}{\ \ 1} \in \mr{PSL}_{2}(\R)$ if $\cu \in \R$ is a cusp of $\rG$. Roughly speaking a vvmf for $\rG$ of any weight $w\in 2\Z$ with respect to a multiplier $\rho$ is a meromorphic vector-valued function $\X: \h \rightarrow \C^{d}$ which satisfies a \emph{functional equation} of the form $\X(\g \tau)=\rho(\g)(c\tau+d)^{w}\X(\tau)$ for every $\g=\pm\tmt{a}{b}{c}{d} \in \rG$ and is also meromorphic at every cusp of $\rG$. The multiplier $\rho$ is a representation of $\rG$ of rank $d$ for arbitrary $d$ and is an important ingredient in the theory of vvmf. This article deals with the vvmf of $\rG$ of any even integer weight $w$ with respect to a generic kind of multiplier which we call an {\emph{\textbf{admissible multiplier}}}. This amounts to little loss of generality and is defined in the following  
\begin{defn}[Admissible Multiplier]\label{multiplier}\rm
Let $\rG$ be any Fuchsian group of the first kind with a cusp at $\infty$ and $\rho : \rG \rightarrow \mr{GL}_d(\C)$ be a rank $d$ representation of $\rG$. We say that $\rho$ is an admissible multiplier of $\rG$ if it satisfies the following properties:
\begin{enumerate}
\item $\rho(t_{\infty})$ is a diagonal matrix, i.e. there exists a diagonal matrix $\Lambda_{\infty} \in \mr{M}_d(\C)$ such that $\rho(t_{\infty})=\exp(2 \pi \Ii \Lambda_{\infty})$ and $\Lambda_{{\infty}}$ will be called an exponent matrix of cusp $\infty$. From now we fix an exponent matrix $\Lambda_\infty$ and it will be denoted by $\Lambda$.
\item $\rho(t_{\cu})$ is a diagonalizable matrix for every  $\cu \in \widehat{\Cu}_{_\rG}\backslash \{\infty\}$, \ie there exists an invertible matrix $\mathcal{P}_\cu \in \mr{GL}_d(\C)$ and a diagonal matrix $\Lambda_{\cu} \in \mr{M}_d(\C)$ such that $\mathcal{P}_{\cu}^{-1}  \rho(t_{\cu})\mathcal{P}_{\cu}= \exp(2 \pi \Ii \Lambda_{\cu})$, and $\Lambda_{{\cu}}$ will be called an exponent matrix of cusp $\cu$.
\end{enumerate}
\end{defn}
\begin{note}\rm
Note that each exponent $\Lambda_{_{\cu}}$ for every $\cu \in \Cu_{_{\rG}}$, is defined only up to changing any diagonal entry by an integer and therefore $\Lambda_{_{\cu}}$  is defined to be the unique exponent satisfying $0 \leq (\Lambda_{_{\cu}})_{\xi \xi} < 1$ for all $1\leq \xi \leq d$.  All modular forms have an infinite series expansion at the cusp $\cu \in \widehat{\Cu}_{_\rG}$. These expansions will be referred to as Fourier series expansions. Often these expansions are also referred to as $\tq_{_z}$- expansion with respect to $z \in \h_{_{\rG}}^{*}$ where in case of $z \in \E_{_{\rG}} \cup \Cu_{_{\rG}}$
$$ \tq_{_z}=\left\{\begin{array}{cccccccc}  
\exp\bigg(\frac{2\pi \Ii A_{_{z}}^{-1}\tau}{h_{z}}\bigg) & \mr{if} \ z \in \Cu_{_{\rG}}\\  
\big(\frac{\tau-z}{\tau-\bar{z}} \big)^{\ell} & \mr{if }\ z \in \E_{_{\rG}} \mr{ of \ order }\ \ell
\end{array}\,. \right .$$
\end{note}

\begin{remark}\label{admissible-remark}\rm$\;$
\begin{enumerate}
\item Dropping the diagonalizability does not introduce serious complications. The main difference is the Fourier coefficient in $\tq_{_{z}}$-expansions  become polynomials in $\tau$. A revealing example of such a vvmf is $\X(\tau)=\cmt{\tau}{1}$ of weight $w=-1$ for any $\rG$ with respect to the multiplier $\rho$ which is the defining representation of $\rG$. 
\item Obviously, if $\rho(t_{\infty})$ was also merely diagonalizable, $\rho$ could be replaced with an equivalent representation satisfying the assumption (1) of the multiplier system. Thus in this sense, assumption (1) is assumed without the loss of generality for future convenience.  Since almost every matrix is diagonalizable, the generic representations are admissible. For example: the rank two admissible irreducible representations of $\G(1)$ fall into 3 families parameterized by 1 complex parameter, and only six irreducible representations are not admissible. For details see section~4 of~\cite{Gannon1}.

\item The reason for assumptions (1) and (2) in the Definition~\ref{multiplier} of the multiplier system is that any vvmf  $\X(\tau)$ for $\rho$ will have $\tq_{_{\cu}}$-expansions.
\end{enumerate}
\end{remark}

\begin{defn}\label{mvvaf}\rm
Let $\rG$ be any Fuchsian group of the first kind with a cusp at $\infty$, $w \in 2\Z$ and $\rho: \rG\rightarrow \mr{GL}_{d}(\C)$ be any rank $d$ admissible multiplier of $\rG$. Then a meromorphic vector-valued function $\X : \h \rightarrow \C^{d}$ is a \emph{meromorphic vvmf} of weight $w$ of $\rG$ with respect to multiplier $\rho$, if $\X(\tau)$ has finitely many poles in $\widehat{\mr{F}}_{_{\rG}} \cap \h$ and has the following functional and cuspidal behaviour.
\begin{enumerate}
{\bf \item Functional behaviour} $$ \X(\g\tau) = \rho(\g) j(\g,\tau )^{w} \X(\tau), \quad \forall \g \in \rG \ \& \ \forall \tau \in \h,$$
{\bf \item Cuspidal behaviour}\\
\begin{enumerate}
{\item{\underline{at the cusp $\infty$}}:} $$ \X(\tau) = \tq^{\ \Lambda}\sum_{n=-M}^{\infty} \X_{[n]} \ \tq^{\ n}, \ \X_{[n]} \in \C^{d},$$ 
{\item{\underline{at the cusp $\cu (\neq \infty)$}}:}   $$  \X(\tau) = (\tau - \cu)^{-w} \mc{P}_{\cu}\tq^{\ \Lambda_{\cu}}\mc{P}_{\cu}^{^{-1}} \sum_{n=-M_{\cu}}^{\infty}  \X^{^{\cu}}_{[n]} \ \tq_{_{\cu}}^{\ n}, \ \X^{^{\cu}}_{[n]} \in \C^{d}.$$ 
\end{enumerate}
\end{enumerate}
\end{defn}

Following this weakly holomorphic and holomorphic vvmf of even integer weight with respect to an admissible multiplier is now defined.  
\begin{defn}\label{maindefn2}\rm
Let $\rG$ be any Fuchsian group of the first kind\,, $\rho$ be an admissible multiplier of $\rG$ of rank $d$ and $w \in 2\Z$. Then
\begin{enumerate}
\item A meromorphic vvmf $\X(\tau)$ is said to be \textbf{weakly holomorphic vvmf} for $\rG$ of weight $w$ and multiplier $\rho$ if $\X(\tau)$ is holomorphic throughout $\h$. Let $\m_{w}^{!}(\rho)$ denote the set of all such weakly holomorphic vvmf for $\rG$ of weight $w$ and multiplier $\rho$.

\item $\X(\tau) \in \m_{w}^{!}(\rho)$ is called a \textbf{holomorphic vvmf} if $\X(\tau)$ is holomorphic throughout $\h_{_{\rG}}^{*}$. Let $\m_{w}(\rho)$ denote the set of all such holomorphic vvmf for $\rG$ of weight $w$ and multiplier $\rho$.
 \end{enumerate}
\end{defn}

\begin{remark}\label{ring-RG}\rm
Let $R_{_\rG}$ denote the ring of scalar-valued weakly holomorphic automorphic functions of $\rG$. Then $R_{_{\rG}}:=\m_{0}^{!}(1)=\C[\mr{J}_{_{\rG}}^{^{\cu_1}}, \ldots, \mr{J}_{_{\rG}}^{^{\cu_\n}}]$, where $\n$ is the number of elements in the set $\widehat{\Cu}_{_{\rG}}$ and $\mr{J}_{_{\rG}}^{^\cu}$ is the normalized hauptmodul of $\rG$ with respect to $\cu \in \widehat{\Cu}_{_{\rG}}$. There is an obvious $R_{_\rG}$-module structure on $\m_{w}^{!}(\rho)$. Without loss of generality we may assume that $\cu_1=\infty$.
\end{remark}

%%%%%%%%%%% Section 4 - Lifting of  modular forms %%%%%%%%%%

\section{Lifting of modular forms}\label{lift} Let G be any Fuchsian group of the first kind with a cusp at $\infty$ and H be any finite index subgroup of G. In this section the relation between weakly holomorphic vvmf of H and G is established. Let $\n$ be the number of inequivalent cusps and $\fl$ be the number of inequivalent elliptic fixed points of $\rG$, and let $m_{j}, 1\leq j \leq \fl$, denote the orders of the elliptic fixed points. The weight of the cusp form $\Delta_{_\rG}(\tau)$ is $2\, \fL$ where $\fL=\mr{lcm}[m_{j}, 1\leq j \leq \fl]$. Think of it as the analogue for $\rG$ of the cusp form $\Delta(\tau) = q\prod_{n=1}^\infty(1-q^n)^{24}$ of $\G(1)$ of weight $12=2 \,\mr{lcm}[2,3].$ Like $\Delta(\tau)$, $\Delta_{_{\rG}}(\tau)$ is holomorphic throughout $\h_{_{\rG}}^{*}$ and nonzero everywhere except at the cusp $\infty$. Because $\Delta_{_{\rG}}(\tau)$ is holomorphic and nonzero throughout the simply connected domain $\h$, it possesses a holomorphic logarithm $\log \Delta_{_{\rG}}(\tau)$ in $\h$. For any $w \in \C$ define $\Delta_{_{\rG}}(\tau)^{w}=\exp(w\ \log\Delta_{_{\rG}}(\tau))$ then  $\Delta_{_{\rG}}(\tau)^{w}$ is also holomorphic throughout $\h$. A little work shows that it is a holomorphic modular form. For any $\rG$, let $\nu : \rG \rightarrow \C^{\times}$ denote the multiplier of the scalar-valued modular form $\Delta_{_\rG}(\tau)^{^{\frac{1}{2\fL}}}$. For example, in case of $\rG=\G(1)$ the multiplier $\nu$ for any $\g=\pm\tmt{a}{b}{c}{d}\in \G(1)$ is explicitly defined as follows:
\begin{equation*}
\nu(\g) = \left\{
\begin{array}{rl}
\exp[2\pi \Ii (\frac{a+d}{12c})-1-2 \sum_{i=1}^{c-1}\frac{i}{c}(\frac{di}{c}-\lfloor \frac{di}{c}\rfloor -\frac{1}{2})] & \text{if } c\neq 0\\
\exp[2\pi \Ii (\frac{a(b-3)+3}{12})] & \text{if } c = 0
\end{array} \right. 
\end{equation*}
More details on the multiplier system associated to $\eta(\tau)$ can be found in~\cite{Apostol,Knopp} and in general to any modular forms in~\cite{Rankin}. 
$\Delta_{_{\rG}}(\tau)$ is  obtained through exploiting its close connection with quasimodular form  $E_{(2,\rG)}(\tau)$ of weight 2 and depth 1 of group $\rG$. It is a solution to the equation $$\tq \frac{d }{d\tq} \Delta_{_{\rG}}(\tq)= \alpha \Delta_{_{\rG}}(\tq) E_{(2,\rG)}(\tq)$$ for some constant $\alpha$. For any $\rG$ with at least one cusp, the quasimodular forms $E_{(2,\rG)}(\tau)$ are discussed in~\cite{Zemel} and the cusp forms like $\Delta$ in~\cite{Cummins}. Using the above technical information, we obtain the following 
\begin{lem}\label{weight0 relation}
For any $w \in 2\Z$\,, $\m^{!}_{w}(\rho)$ and $\m^{!}_{0}(\rho \otimes \nu^{-w})$ are naturally isomorphic as $R_{_{\rG}}$-modules, where the isomorphism is defined by $\X(\tau) \mapsto \Delta_{_\rG}(\tau)^{^{-\frac{w}{2\fL}}}\X(\tau)$.
\end{lem}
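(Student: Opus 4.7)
The plan is to take the candidate map $\Phi : \X(\tau) \mapsto \Delta_{_\rG}(\tau)^{-\frac{w}{2\fL}}\X(\tau)$ at face value, check well-definedness (functional equation, cuspidal behaviour), exhibit the obvious inverse $\Psi : \Y(\tau) \mapsto \Delta_{_\rG}(\tau)^{\frac{w}{2\fL}}\Y(\tau)$, and finally observe $R_{_\rG}$-linearity. Write $f(\tau) := \Delta_{_\rG}(\tau)^{\frac{1}{2\fL}}$, which by construction is a scalar-valued holomorphic modular form of weight $1$ for $\rG$ with multiplier $\nu$, so
\[
  f(\g\tau) \;=\; \nu(\g)\, j(\g,\tau)\, f(\tau), \qquad \forall\,\g \in \rG.
\]
Crucially, $f$ is holomorphic and nowhere-vanishing on $\h$ (since $\Delta_{_\rG}$ is) and is holomorphic and nonzero at every cusp other than $\infty$, where it vanishes to first order in $\tq$; this makes $f^{-w}$ a well-defined holomorphic function on $\h$ whose only cuspidal singularity is a possible pole/zero at $\infty$.

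The first step is to verify the functional equation. For $\X \in \m^{!}_w(\rho)$ and $\g \in \rG$, raising the cocycle identity for $f$ to the $-w$-th power gives $f^{-w}(\g\tau)=\nu(\g)^{-w}j(\g,\tau)^{-w}f^{-w}(\tau)$, whence
\[
  \bigl(f^{-w}\X\bigr)(\g\tau) \;=\; \nu(\g)^{-w}\rho(\g)\, j(\g,\tau)^{-w+w}\, \bigl(f^{-w}\X\bigr)(\tau)
  \;=\; (\rho \otimes \nu^{-w})(\g)\, \bigl(f^{-w}\X\bigr)(\tau),
\]
which is exactly the weight-$0$ transformation law for the multiplier $\rho\otimes\nu^{-w}$. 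Note also that $\rho\otimes\nu^{-w}$ is again admissible, since multiplying $\rho(t_\cu)$ by the nonzero scalar $\nu(t_\cu)^{-w}$ preserves (diagonal/diagonalizable) form and merely shifts the exponent matrices $\Lambda_\cu$ by a scalar.

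The second step is the cuspidal check. At $\infty$, the expansion $f(\tau) = \tq^{1/(2\fL)}\sum_{n\geq 0} a_n \tq^n$ with $a_0\neq 0$ gives $f^{-w}(\tau) = \tq^{-w/(2\fL)}\sum_{n\geq 0}b_n\tq^n$, and multiplying by the expansion of $\X(\tau) = \tq^{\Lambda}\sum_{n\geq -M}\X_{[n]}\tq^n$ produces a series of the form $\tq^{\Lambda - (w/2\fL)I}\sum_{n\geq -M'}\Y_{[n]}\tq^n$, which is the required shape for weight $0$ with exponent matrix $\Lambda - (w/2\fL)I$ for the twisted multiplier (since $\nu(t_\infty)^{-w}=\exp(-2\pi\Ii w/(2\fL))$). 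At a cusp $\cu\neq\infty$, $f$ is holomorphic and nonvanishing, so conjugating by $A_\cu$ shows $f(\tau)=(\tau-\cu)^{-1}g_\cu(\tau)$ with $g_\cu$ a holomorphic, nonvanishing local unit admitting a $\tq_\cu$-expansion; raising to $-w$ and multiplying into the expansion in Definition~\ref{mvvaf}(2)(b) keeps the same template, with the factor $(\tau-\cu)^{-w}$ cancelling as required for weight $0$ and with $\Lambda_\cu$ shifted by a scalar matching the $\nu^{-w}$ twist.

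For the inverse, $\Psi(\Y) = f^w\Y$ is defined identically and the same computations show $\Psi(\Y) \in \m^{!}_w(\rho)$; since $f^w f^{-w} = 1$ on $\h$, the maps are mutually inverse bijections of sets. Finally, $R_{_\rG}$-linearity is immediate: for $\phi \in R_{_\rG} = \m^{!}_0(1)$,
\[
  \Phi(\phi\X) \;=\; f^{-w}\,\phi\,\X \;=\; \phi \cdot (f^{-w}\X) \;=\; \phi\cdot \Phi(\X).
\]
I expect the only genuinely technical point to be the second step: bookkeeping the $\tq_\cu$-expansion at non-$\infty$ cusps through the $(\tau-\cu)^{-w}$ prefactor and the conjugation by $\mathcal{P}_\cu$, and checking that the resulting exponent matrix for $\rho\otimes\nu^{-w}$ lies in the admissible range after renormalization — a routine but notation-heavy verification that uses only the nonvanishing of $\Delta_{_\rG}$ away from $\infty$.
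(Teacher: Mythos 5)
Your proposal is correct and matches the argument the paper intends: the paper states Lemma~\ref{weight0 relation} without writing out a proof, but the preceding discussion of $\Delta_{_\rG}(\tau)$, its nonvanishing on $\h$ and away from $\infty$, and the multiplier $\nu$ of $\Delta_{_\rG}(\tau)^{1/(2\fL)}$ sets up exactly the verification you carry out (weight-$0$ transformation law with multiplier $\rho\otimes\nu^{-w}$, preservation of cuspidal expansions, inverse given by multiplication by $\Delta_{_\rG}(\tau)^{w/(2\fL)}$, and $R_{_\rG}$-linearity). The only cosmetic caveat is that your claim that $f$ vanishes to exactly first order in $\tq$ at $\infty$ is not needed and not justified by the paper's text; any finite leading exponent suffices for the meromorphicity at $\infty$.
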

We now state the two theorems which are the main results and focus of this section. 
\begin{thm}\label{adm-ind}
Let $\rG$ be any Fuchsian group of the first kind and $\mr{H}$ be any finite index subgroup of $\rG$, \ie $[\rG:\mr{H}] = m$. If $\rho$ is a rank $d$ admissible representation of $\mr{H}$ then the induced representation $\widetilde\rho = \mr{Ind}^{^{\rG}}_{_\mr{H}} (\rho)$ of $\rG$ of rank $dm$ is also an admissible representation.  \end{thm}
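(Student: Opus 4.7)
The plan is to unwind the induced-representation construction cusp by cusp, and reduce admissibility of $\widetilde\rho$ to admissibility of $\rho$ together with one elementary linear-algebra fact.

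\medskip

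First I would fix a system of coset representatives $\{g_1,\dots,g_m\}$ for $\mr{H}\backslash \rG$ and describe $\widetilde\rho$ concretely in the basis $\{f_{i,\xi}:1\le i\le m,\,1\le \xi\le d\}$ where $f_{i,\xi}$ is the element of $\mr{Ind}^{^{\rG}}_{_\mr{H}}(\rho)$ supported on the coset of $g_i$ and equal to the $\xi$-th basis vector of $\C^d$ there. For any $g\in\rG$, writing $g_i g = h_{i,g}\, g_{\sigma_g(i)}$ with $h_{i,g}\in\mr{H}$ and $\sigma_g$ the induced permutation of $\{1,\dots,m\}$, one obtains the standard block formula $\widetilde\rho(g)f_{i,\xi}=\sum_{\xi'}\rho(h_{\sigma_g^{-1}(i),g})_{\xi'\xi}\,f_{\sigma_g^{-1}(i),\xi'}$. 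This makes transparent how $\widetilde\rho(g)$ permutes the $d$-dimensional blocks $V_i=\mathrm{span}\{f_{i,\xi}\}$.

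\medskip

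Next, fix any cusp $\cu\in\widehat\Cu_{_\rG}$ with stabilizer generator $t_\cu$. Decompose $\{1,\dots,m\}$ into the orbits of $\sigma_{t_\cu}$; each orbit $\mathcal O=\{i_1,\dots,i_k\}$ (with $g_{i_j}t_\cu\in \mr{H}\,g_{i_{j+1}}$ cyclically) yields a $\widetilde\rho(t_\cu)$-invariant subspace $V_{\mathcal O}=\bigoplus_{j} V_{i_j}$ of dimension $kd$. A direct telescoping of the explicit formula above gives $\widetilde\rho(t_\cu)^{k}|_{V_{i_j}}=\rho(g_{i_j}\,t_\cu^{k}\,g_{i_j}^{-1})$. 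By minimality of $k$, the element $g_{i_j}t_\cu^{k}g_{i_j}^{-1}$ is precisely the stabilizer generator in $\mr{H}$ of the cusp $g_{i_j}\cdot\cu$ (this is exactly the relation between cusps of $\mr{H}$ and cosets to be used in Lemma~\ref{corep}). By admissibility of $\rho$, each such matrix is diagonalizable, so $\widetilde\rho(t_\cu)^k$ is diagonalizable on $V_{\mathcal O}$.

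\medskip

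The last step is the linear-algebra observation: if $M\in\mr{GL}_{N}(\C)$ and $M^k$ is diagonalizable with nonzero eigenvalues $\mu_1,\dots,\mu_r$, then $M$ annihilates $\prod_{s}(x^k-\mu_s)$, whose roots are the $k$-th roots of the $\mu_s$ and are pairwise distinct. Hence $M$ itself is diagonalizable. Applying this to $\widetilde\rho(t_\cu)$ on each $V_{\mathcal O}$ and summing over orbits shows $\widetilde\rho(t_\cu)$ is diagonalizable for every cusp $\cu$ of $\rG$. This already establishes condition~(2) of Definition~\ref{multiplier}; for the cusp $\infty$, since $\widetilde\rho(t_\infty)$ is diagonalizable we may, following Remark~\ref{admissible-remark}(2), replace $\widetilde\rho$ by an equivalent representation in which $\widetilde\rho(t_\infty)$ is actually diagonal, giving condition~(1).

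\medskip

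The step most likely to need care is the identification $g_{i_j}t_\cu^{k}g_{i_j}^{-1}=t^{\mr{H}}_{g_{i_j}\cdot\cu}$: one must check that the orbit length $k$ on $\mr{H}\backslash\rG$ coincides with the ramification $h^{\mr{H}}_{g_{i_j}\cu}/h^{\rG}_{\cu}$ of cusp widths, and that $g_{i_j}\cdot\cu$ runs over a genuine $\mr{H}$-inequivalent set of cusps as $j$ varies within and across orbits. Everything else is bookkeeping with the explicit matrix formula for the induced representation.
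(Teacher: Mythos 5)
Your proof is correct, and its overall architecture matches the paper's: fix a cusp $\cu$ of $\rG$, observe that $\widetilde\rho(t_\cu)$ permutes the $d$-dimensional coset blocks, decompose into the cycles of that permutation (these cycles are exactly the $\mr{H}$-inequivalent cusps over $\cu$, with cycle length $h_i$ equal to the ratio of cusp widths --- this is precisely the content of Lemma~\ref{corep}, and your telescoping identity $\widetilde\rho(t_\cu)^k|_{V_{i_j}}=\rho(g_{i_j}t_\cu^k g_{i_j}^{-1})$ together with the minimality of the orbit length is the correct justification for the identification with the stabilizer generator in $\mr{H}$). Where you genuinely diverge is the final step. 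The paper fixes the particular representatives $g_{ij}=t_\cu^j A_i^{-1}$, writes out the block matrix explicitly, and exhibits a full eigenbasis by hand: for each eigenvector $v_{(i,k)}$ of $\rho(t_i)$ with eigenvalue $\lambda_{(i,k)}$ and each $h_i$-th root of unity $\zeta$, the vector with $j$-th sub-block $\zeta^j\lambda_{(i,k)}^{1/h_i}v_{(i,k)}$ is an eigenvector of $\widetilde\rho(t_\cu)$ with eigenvalue $\zeta\lambda_{(i,k)}^{1/h_i}$. You instead invoke the abstract fact that an invertible matrix whose $k$-th power is diagonalizable is itself diagonalizable (via the minimal polynomial $\prod_s(x^k-\mu_s)$ having distinct roots). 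Your route is cleaner, independent of the choice of coset representatives, and avoids the explicit bookkeeping; what it gives up is the explicit list of eigenvalues $\zeta\lambda_{(i,k)}^{1/h_i}$, which the paper uses immediately afterwards in Corollary~\ref{Omega} to read off the exponent matrix $\Omega_\cu$ of $\widetilde\rho$ with entries $\frac{(\Lambda_i)_{kk}+j}{h_i}$. If you want your argument to feed into that corollary you would still need to extract the spectrum, at which point the paper's explicit eigenvectors reappear; as a proof of Theorem~\ref{adm-ind} alone, your version is complete.
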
 

\begin{thm}\label{induction}
Let $\rG$, $\mr{H}$, $\rho$ be as in theorem~\ref{adm-ind} and $w$ be an even integer. Then there is a natural $\mr{R}_{_\rG}$-module isomorphism between $\m^{!}_{w}(\rho)$ and $\m^{!}_{w}(\widetilde{\rho})$ where the induced representation $\widetilde{\rho} = \mr{Ind}^{^\rG}_{_\mr{H}} (\rho)$ is an admissible representation of $\rG$ of rank $dm$.
\end{thm}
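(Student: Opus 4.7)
The plan is to construct explicit, mutually inverse $\mr{R}_{_\rG}$-linear maps between $\m^{!}_{w}(\rho)$ and $\m^{!}_{w}(\widetilde{\rho})$. Fix coset representatives $\gamma_1 = 1, \gamma_2, \ldots, \gamma_m$ for $\mr{H}\backslash \rG$ and realise $\widetilde{\rho}$ on $\C^{dm} = \bigoplus_{i=1}^{m} \C^{d}$ in the standard way: for each $g \in \rG$ write $\gamma_i\, g = h(i,g)\, \gamma_{i \cdot g}$ with $h(i,g) \in \mr{H}$, so the $i$-th block of $\widetilde{\rho}(g)\,v$ is $\rho(h(i,g))\,v_{i \cdot g}$. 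The lifting map is then
$$ \X \longmapsto \widetilde{\X}, \qquad \widetilde{\X}_i(\tau) := j(\gamma_i, \tau)^{-w}\, \X(\gamma_i\, \tau) \qquad (i = 1, \ldots, m). $$

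First I verify the functional equation on $\rG$. The cocycle identity $j(\gamma_i g, \tau) = j(\gamma_i, g\tau)\, j(g, \tau)$ combined with the $\mr{H}$-functional equation satisfied by $\X$ yields
$$ \widetilde{\X}_i(g\tau) \,=\, \rho(h(i,g))\, j(g,\tau)^{w}\, \widetilde{\X}_{i \cdot g}(\tau), $$
which reassembles to $\widetilde{\X}(g\tau) = \widetilde{\rho}(g)\, j(g,\tau)^{w}\, \widetilde{\X}(\tau)$. Holomorphy on $\h$ is automatic since each block is $\X$ pulled back by an element of $\mr{PSL}_2(\R)$.

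The key technical step is the cuspidal behaviour at every $\cu \in \widehat{\Cu}_{_\rG}$. For each $i$ the image $\gamma_i \cdot \cu$ is a cusp of $\mr{H}$, at which admissibility of $\rho$ guarantees a $\tq$-expansion of $\X$; pulling this expansion back through $\gamma_i$ supplies a power-series development of the block $\widetilde{\X}_i$ at $\cu$. Lemma~\ref{corep} furnishes the combinatorial correspondence between $\mr{H}$-inequivalent cusps above $\cu$ and the appropriate double cosets inside $\mr{H}\backslash\rG/\rG_\cu$, so the indices $i$ partition cleanly into $\mr{H}$-equivalence classes of cusps above $\cu$. The block-diagonalising data $(\mc{P}_\cu, \Lambda_\cu)$ for $\widetilde{\rho}(t_\cu)$ produced in the proof of Theorem~\ref{adm-ind} is precisely what converts this raw collection of expansions into the $\mc{P}_\cu\, \tq_\cu^{\Lambda_\cu}\, \mc{P}_\cu^{-1}$-framed expansion demanded by Definition~\ref{mvvaf}. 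I expect this bookkeeping, especially at a non-$\infty$ cusp where the $\mc{P}_\cu$-conjugation must be tracked, to be the main obstacle.

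For the inverse, given $\widetilde{\X} \in \m^{!}_{w}(\widetilde{\rho})$ I set $\X := \widetilde{\X}_1$. Since $\gamma_1 = 1$ the cocycle data specialise to $h(1,h) = h$ and $1 \cdot h = 1$ for every $h \in \mr{H}$, so $\widetilde{\rho}(h)$ preserves the first block and restricts to $\rho(h)$ there; consequently $\X$ satisfies the $\mr{H}$-functional equation with multiplier $\rho$, and its cuspidal expansions may be read off from those of $\widetilde{\X}$ at the $\rG$-cusps lying below. The two constructions are mutually inverse: one direction is immediate because $\widetilde{\X}_1 = \X$ by definition; for the other, evaluating the functional equation of $\widetilde{\X}$ with $g = \gamma_i$ and reading the first block (noting $\gamma_1 \gamma_i = \gamma_i$, hence $h(1,\gamma_i) = 1$ and $1 \cdot \gamma_i = i$) forces $\widetilde{\X}_i(\tau) = j(\gamma_i, \tau)^{-w}\, \widetilde{\X}_1(\gamma_i \tau)$, which is exactly the $i$-th block of the induced form of $\widetilde{\X}_1$. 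Finally, $\mr{R}_{_\rG}$-linearity is immediate: elements of $\mr{R}_{_\rG} = \m^{!}_{0}(1)$ are $\rG$-invariant and therefore commute with each of the operations $\X \mapsto j(\gamma_i, \tau)^{-w}\, \X(\gamma_i \tau)$, so induction and restriction are $\mr{R}_{_\rG}$-linear.
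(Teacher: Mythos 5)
Your proposal is correct, and its core is the same as the paper's: lift by evaluating $\X$ at coset translates of $\tau$, invert by reading off the first $d$ components, and use Lemma~\ref{corep} together with Theorem~\ref{adm-ind} to organise the cusp expansions. The one genuine difference is how the weight is handled. The paper first reduces to weight zero via Lemma~\ref{weight0 relation} (twisting by $\Delta_{_\rG}^{-w/2\fL}$ and its multiplier $\nu$, and invoking the projection formula $\mr{Ind}_{_{\mr{H}}}^{^{\rG}}(\rho\otimes\nu_{_{\mr{H}}}^{-w})=\widetilde{\rho}\otimes\nu_{_{\rG}}^{-w}$), so that the lifting map needs no automorphy factors; you instead stay at weight $w$ and build the factor $j(\gamma_i,\tau)^{-w}$ into the definition of $\widetilde{\X}_i$, verifying the functional equation by the cocycle identity. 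Since $w$ is even, $j(\cdot,\tau)^{w}$ is a genuine cocycle on $\mr{PSL}_2(\R)$ and your computation $\widetilde{\X}_i(g\tau)=\rho(h(i,g))\,j(g,\tau)^{w}\,\widetilde{\X}_{i\cdot g}(\tau)$ is right; your route is more self-contained (no $\Delta_{_\rG}$, $\nu$, or projection formula needed), while the paper's buys cleaner weight-zero formulas at the cost of that extra machinery. Two further remarks: your explicit check that the two maps are mutually inverse (via $h(1,\gamma_i)=1$, $1\cdot\gamma_i=i$) is a detail the paper elides; and like the paper, you leave the cusp-expansion bookkeeping at non-$\infty$ cusps schematic --- you correctly identify that the eigenvector data from the proof of Theorem~\ref{adm-ind} supplies $(\mc{P}_{\cu},\Lambda_{\cu})$ for $\widetilde{\rho}(t_{\cu})$, but a complete write-up should actually match the $\tq_{\cu_i}$-expansions of $\X$ at the $\mr{H}$-cusps $\cu_i$ above $\cu$ against the grouping of indices $j$ within each block of size $dh_i$, which is exactly the content of Corollary~\ref{Omega}.
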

Theorem~\ref{adm-ind} is an important tool to prove Theorem~\ref{induction} which establishes the relation between weakly holomorphic vvmf of H and G. More importantly, the isomorphism between $\m^{!}_{w}(\rho)$ and $\m^{!}_{w}(\widetilde{\rho})$ is given by $$\X(\tau) \mapsto \bigg( \X(\g_{1}^{-1}\tau), \X(\g_{2}^{-1}\tau), \cdots,  \X(\g_{m}^{-1}\tau)\bigg)^{\mathfrak{t}},$$ where $\{ \g_1, \cdots, \g_m\}$ are distinct coset representatives of H in $\rG$.

Before giving the proofs of Theorems~\ref{adm-ind} and~\ref{induction} let us recall why $\widetilde\rho = \mr{Ind}^{^\rG}_{_\mr{H}} (\rho)$ defines a representation. Write $\rG= \g_{1}\mr{H} \cup \g_{2}\mr{H} \cup \cdots \cup \g_{m}\mr{H}$. Without loss of generality we may assume that $\g_1=1$. The representation $\rho :\mr{H} \rightarrow \mr{GL}_{d}(\C)$ can be extended to a function on all of $\rG$, \ie $\rho: \rG \longrightarrow \mr{M}_{d}(\C)$ by setting $\rho(x) =0,  \forall x \notin \mr{H}$ where  $\mr{M}_{d}(\C)$ is the set of all $d\times d $ matrices over $\C$. The induced representation $\widetilde\rho =\mr{Ind}^{^{\rG}}_{_\mr{H}} (\rho) : \rG \longrightarrow \mr{GL}_{dm}(\C)$ is defined by \begin{equation}\label{eq:indrep} \widetilde\rho(x) = \left( \begin{array}{cccc}
\rho(\g_{1}^{-1} x \g_{1}) & \rho(\g_{1}^{-1} x \g_{2}) & \ldots  &\rho(\g_{1}^{-1} x \g_{m})\\
\rho(\g_{2}^{-1} x \g_{1}) & \rho(\g_{2}^{-1} x \g_{2}) & \ldots & \rho(\g_{2}^{-1} x \g_{m}) \\
\vdots & \vdots & \ddots& \vdots \\
\rho(\g_{m}^{-1} x \g_{1}) & \rho(\g_{m}^{-1} x \g_{2}) & \ldots  &\rho(\g_{m}^{-1} x \g_{m})\\
\end{array} \right), \quad \forall x \in \rG. \end{equation} 
Due to the extension of $\rho$ for any $x \in \rG$ and $\forall 1\leq i \leq m$ there exists a unique $1\leq j \leq m$ such that $\rho(\g_{i}^{-1} x \g_{j}) \neq 0$. Therefore, exactly one nonzero $d \times d$ block appear in every row and every column of~\eqr{indrep}. 

Before going into the details of the proofs of the Theorems~\ref{adm-ind} and~\ref{induction}, let us confirm that $\widetilde\rho$ does not depend on the choice of the coset representatives. 
\begin{lem}\label{cosetrep}
Let $\rG, \mr{H}$ be as in Theorem~\ref{adm-ind}. Let $\widetilde{R}=\{ g_1, \cdots, g_m\}$, $\widehat{R}=\{\g_1, \cdots, \g_m \}$ be two different coset representatives of $\mr{H}$ in $\rG$. Let $\rho: \mr{H}\rightarrow \mr{GL}_d(\C)$ be an admissible representation. Then the induced representation $\widetilde{\rho}=\mr{Ind}_{_{\mr{H}}}^{^{\rG}}(\rho)$ and $\widehat{\rho}=\mr{Ind}_{_{\mr{H}}}^{^{\rG}}(\rho)$ with respect to the coset representatives $\widetilde{R}$ and $\widehat{R}$ respectively are equivalent representations of $\rG$. 
\end{lem}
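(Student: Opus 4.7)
The plan is to build an explicit intertwiner $T \in \mr{GL}_{dm}(\C)$ for which $\widehat{\rho}(x) = T\,\widetilde{\rho}(x)\, T^{-1}$ holds for every $x \in \rG$. The key observation is that two complete sets of coset representatives for $\mr{H}$ in $\rG$ differ only by a permutation of the cosets together with a right multiplication by elements of $\mr{H}$, and these two pieces of data assemble precisely into $T$.

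First I would compare $\widetilde R$ and $\widehat R$: for each $i$ the coset $\g_i \mr{H}$ equals $g_{\sigma(i)}\mr{H}$ for a uniquely determined index $\sigma(i) \in \{1,\dots,m\}$, and $\sigma$ is a permutation of $\{1,\dots,m\}$ because $\widetilde R$ and $\widehat R$ are both full sets of coset representatives. Writing $\g_i = g_{\sigma(i)}\, h_i$ with $h_i \in \mr{H}$, for any $x \in \rG$ one has
\[ \g_i^{-1}\, x\, \g_j \;=\; h_i^{-1}\,\bigl(g_{\sigma(i)}^{-1}\, x\, g_{\sigma(j)}\bigr)\, h_j. \]
Since $\mr{H}$ is a subgroup, the left-hand side belongs to $\mr{H}$ iff $g_{\sigma(i)}^{-1} x g_{\sigma(j)}$ does; in that case $\rho$ is multiplicative, and in the complementary case both expressions are zero under the extension $\rho(y)=0$ for $y \notin \mr{H}$ used in~\eqr{indrep}. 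This yields the block-by-block identity
\[ \widehat{\rho}(x)_{ij} \;=\; \rho(h_i)^{-1}\, \widetilde{\rho}(x)_{\sigma(i),\sigma(j)}\, \rho(h_j), \qquad 1\le i,j \le m. \]

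Next I would assemble $T$ from two pieces. Let $D \in \mr{GL}_{dm}(\C)$ be the block-diagonal matrix whose $i$-th diagonal block is $\rho(h_i)^{-1}$, and let $P$ be the block permutation matrix whose $(i,j)$-block equals $I_d$ when $j = \sigma(i)$ and $0$ otherwise. Setting $T = DP$, a direct block computation gives
\[ (T\,\widetilde{\rho}(x)\, T^{-1})_{ij} \;=\; \rho(h_i)^{-1}\, \widetilde{\rho}(x)_{\sigma(i),\sigma(j)}\, \rho(h_j), \]
which by the previous display is exactly $\widehat{\rho}(x)_{ij}$. Because $\sigma$ is a permutation and every $\rho(h_i)$ is invertible, $T$ lies in $\mr{GL}_{dm}(\C)$, and the equivalence of $\widetilde\rho$ and $\widehat\rho$ follows.

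The only real subtlety — and more a bookkeeping matter than a genuine obstacle — is that $\rho$ has been extended by zero outside $\mr{H}$, so the identity $\rho(ab)=\rho(a)\rho(b)$ is only valid when both $a$ and $b$ lie in $\mr{H}$. One has to verify that the zero blocks of $\widetilde{\rho}(x)$ and of $\widehat{\rho}(x)$ occur in positions compatible with the block permutation $P$; this reduces exactly to the observation already noted, that $\g_i^{-1} x \g_j \in \mr{H}$ iff $g_{\sigma(i)}^{-1} x g_{\sigma(j)} \in \mr{H}$. Once this compatibility is pinned down, the proof is routine block-matrix algebra.
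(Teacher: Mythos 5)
Your proof is correct and is essentially the argument the paper gives: the paper writes $\g_i = g_i x_i$ ``up to reordering'' and conjugates by the block-diagonal matrix $\mr{Diag}\big(\rho(x_1),\dots,\rho(x_m)\big)$, which is exactly your $T=DP$ with the permutation $\sigma$ absorbed into the relabelling of the representatives. Your version is merely more explicit about the block permutation matrix and about why the zero blocks line up, which the paper leaves tacit.
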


\begin{proof}  For each $1\leq i \leq m$ there exists $x_i \in \mr{H}$ such that $\g_i =g_i x_i $ up to reordering $g_i$'s and $\g_i$'s\,. Then $\widehat{\rho}(g)=D^{-1}\widetilde{\rho}(g)D$ for every $g \in \rG$ where block diagonal matrix  $D=\mr{Diag}\big( \rho(x_{1}), \cdots , \rho(x_{m})\big)$ is the conjugating matrix between $\widetilde{\rho}$ and $\widehat{\rho}$\,.\end{proof}

\begin{lem}\label{corep}
Let $\rG$ and $\mr{H}$ be as in Theorem~\ref{adm-ind}. Fix any cusp $\cu \in \widehat\Cu_{_{\rG}}$ and let $\cu_{1}, \cdots, \cu_{\n_{_{\cu}}}$ be the representatives of the $\mr{H}$-inequivalent cusps which are $\rG$-equivalent to the cusp $\cu$, so \beq\label{eq:c-ineq} \rG\cdot \cu=\cup_{i=1}^{\n_{_{\cu}}} \mr{H}\cdot{\cu_{i}} \,.\eeq Let $k_{_{\cu}}$ be the cusp width of $\cu$ in $\rG$ and $h_{\cu_{i}}$ be the cusp width of $\cu_{i}$ in $\mr{H}$. Write $h_{i}=\frac{h_{\cu_{i}}}{k_{_{\cu}}} \in \Z$, $\rG_{_{\cu}}=\langle t_{\cu}\rangle$ and $A_{i}(\cu)=\cu_{i}$ where $A_{i}\in \rG$. Then $m=\sum_{i} h_{i}$ and coset representatives of $\mr{H}$ in $\rG$ can be taken to be $g_{ij}=t_{\cu}^{j} A_{i}^{^{-1}}$ for all $i$ and $0\leq j < h_{i}$.
\end{lem}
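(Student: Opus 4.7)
The strategy is to first establish the double coset decomposition $\rG=\bigsqcup_{i=1}^{\n_{_{\cu}}} \mr{H}A_{i}\rG_{_{\cu}}$ coming from \eqref{eq:c-ineq}, then compute the number of left $\mr{H}$-cosets inside each double coset in terms of the local stabilizers, and finally verify that $g_{ij}=t_{\cu}^{j}A_{i}^{-1}$ is an explicit set of coset representatives by a direct equivalence test.

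First I would record the local structure at each $\cu_{i}$. Since $A_{i}\in\rG$ with $A_{i}(\cu)=\cu_{i}$, conjugation gives $\rG_{\cu_{i}}=A_{i}\rG_{_{\cu}}A_{i}^{-1}=\langle A_{i}t_{\cu}A_{i}^{-1}\rangle$, and the generator $A_{i}t_{\cu}A_{i}^{-1}$ has cusp width $k_{_{\cu}}$ at $\cu_{i}$. Since $\mr{H}_{\cu_{i}}=\mr{H}\cap\rG_{\cu_{i}}$ is a subgroup of the infinite cyclic group $\rG_{\cu_{i}}$ generated by a parabolic of smallest positive width $h_{\cu_{i}}$ in $\mr{H}$, we conclude that $h_{i}=h_{\cu_{i}}/k_{_{\cu}}\in\Z_{>0}$ and $\mr{H}_{\cu_{i}}=A_{i}\langle t_{\cu}^{h_{i}}\rangle A_{i}^{-1}$.

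Next, using the standard bijection $\rG/\rG_{_{\cu}}\xrightarrow{\sim}\rG\cdot\cu$, $g\rG_{_{\cu}}\mapsto g\cdot\cu$, the hypothesis \eqref{eq:c-ineq} translates into the double coset decomposition
\[
\rG \;=\; \bigsqcup_{i=1}^{\n_{_{\cu}}}\mr{H}\,A_{i}\,\rG_{_{\cu}}.
\]
To count left $\mr{H}$-cosets inside $\mr{H}A_{i}\rG_{_{\cu}}$ I would use the orbit-stabilizer computation $|\mr{H}A_{i}\rG_{_{\cu}}/\mr{H}|=[\rG_{_{\cu}}:A_{i}^{-1}\mr{H}A_{i}\cap\rG_{_{\cu}}]$. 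Conjugating as above gives $A_{i}^{-1}\mr{H}A_{i}\cap\rG_{_{\cu}}=A_{i}^{-1}\mr{H}_{\cu_{i}}A_{i}=\langle t_{\cu}^{h_{i}}\rangle$, so the index equals $h_{i}$. Summing over $i$ yields $m=[\rG:\mr{H}]=\sum_{i}h_{i}$.

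Finally I would verify directly that the $g_{ij}=t_{\cu}^{j}A_{i}^{-1}$ with $1\leq i\leq \n_{_{\cu}}$ and $0\leq j<h_{i}$ give distinct left cosets; since their total number is $\sum_{i}h_{i}=m$, they must exhaust $\rG/\mr{H}$. Two such representatives satisfy $g_{ij}\mr{H}=g_{i'j'}\mr{H}$ iff $A_{i'}t_{\cu}^{j-j'}A_{i}^{-1}\in\mr{H}$. If this holds, then applying the element to $\cu_{i}$ sends it to $\cu_{i'}$, which by the $\mr{H}$-inequivalence in \eqref{eq:c-ineq} forces $i=i'$; the relation then becomes $A_{i}t_{\cu}^{j-j'}A_{i}^{-1}\in\mr{H}\cap\rG_{\cu_{i}}=\mr{H}_{\cu_{i}}=A_{i}\langle t_{\cu}^{h_{i}}\rangle A_{i}^{-1}$, forcing $h_{i}\mid (j-j')$ and hence $j=j'$ within the range $0\leq j,j'<h_{i}$. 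The main (mild) obstacle here is just keeping the bookkeeping consistent between left and right cosets and making sure the conjugation by $A_{i}$ correctly identifies $\mr{H}_{\cu_{i}}$ with $\langle t_{\cu}^{h_{i}}\rangle$; once this is done the rest is a transparent index calculation.
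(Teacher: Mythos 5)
Your proposal is correct, and the second half (distinctness of the cosets $g_{ij}\mr{H}$) is essentially identical to the paper's argument: test $g_{i'j'}^{-1}g_{ij}=A_{i'}t_{\cu}^{j-j'}A_{i}^{-1}\in\mr{H}$, use $\mr{H}$-inequivalence of the $\cu_{i}$ to force $i=i'$, then use minimality of $h_{i}$ to force $j=j'$. Where you genuinely diverge is the counting step. The paper never forms double cosets: it argues directly that every coset $g\mr{H}$ contains some $g_{ij}$ (by tracking where $g^{-1}\cu$ lands under the decomposition \eqref{eq:c-ineq} and reducing the resulting power of $t_{\cu}$ modulo $h_{i}$), which gives $m\leq\sum_i h_i$, and then gets the reverse inequality from distinctness. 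You instead prove $m=\sum_i h_i$ in one stroke via $\rG=\bigsqcup_i\mr{H}A_i\rG_{\cu}$ and the index formula $[\rG_{\cu}:\rG_{\cu}\cap A_i^{-1}\mr{H}A_i]=h_i$, and use distinctness only to conclude exhaustion by counting. Both routes rest on the same local fact, namely $\mr{H}\cap\rG_{\cu_i}=\mr{H}_{\cu_i}=A_i\langle t_{\cu}^{h_i}\rangle A_i^{-1}$; your version is more conceptual and makes the identity $m=\sum_i h_i$ transparent as a double-coset count, while the paper's version has the advantage of producing the representatives constructively (surjectivity is exhibited, not inferred from cardinality). One small point to tidy: the double coset $\mr{H}A_i\rG_{\cu}$ is a union of \emph{right} cosets $\mr{H}x$ (not left cosets $x\mr{H}$), so the quantity you want is $|\mr{H}\backslash\mr{H}A_i\rG_{\cu}|$; since $[\rG:\mr{H}]$ is the same counted either way and inversion exchanges the two kinds of cosets, this does not affect the conclusion, but the notation $|\mr{H}A_i\rG_{\cu}/\mr{H}|$ as written is ill-formed.
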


\proof Let $g$ be any element of $\rG$. Because of the decomposition~\eqr{c-ineq} there is a unique $i$ such that $g^{-1} \cu= \g \cdot \cu_{i}$ for some $\g \in \mr{H}$. Then $A_{i} g \g$ fixes $\cu_{i}$ and so it equals $A_{i} t_{\cu}^{j} A_{i}^{-1}$ for some $j \in \Z$. Recall that $t_{\cu}=A_{\cu} t^{k_{\cu}} A_{\cu}^{-1}$ where $A_{\cu} =\tmt{\cu}{-1}{1}{0}\in \mr{PSL}_2(\R)$ such that $A_{\cu}(\infty)=\cu$. Note that $A_{i} t_{\cu}^{j} A_{i}^{-1}$ and $A_{i} t_{\cu}^{j+h_{i}} A_{i}^{-1}= A_{i} t_{\cu}^{j} A_{i}^{-1}A_{i} t_{\cu}^{h_{i}} A_{i}^{-1}$ lie in the same coset of $\mr{H}$ because $h_{i}$ is the least positive integer such that $A_{i} t_{\cu}^{h_{i}} A_{i}^{-1}\in \mr{H}$. Moreover $\mr{H}_{\cu_{i}}=\langle t_{i}= A_{i} t_{\cu}^{h_{i}} A_{i}^{-1} \rangle$ and $t_{i}=(A_{i}A_{\cu})t^{k_{\cu} h_{i}} (A_{i} A_{\cu})^{-1}$.  Thus we can restrict $0\leq j < h_{i}$. This means that every coset $g\mr{H}$ of $\mr{H}$ in $\rG$ contains an element of the form $t_{\cu}^{j} A_{i}^{-1}:=g_{ij}$ for some $0 \leq j < h_{i}$ and some $1 \leq i \leq \n_{\cu}$. This implies that $m \leq \sum_{i=1}^{\n_{\cu}} h_{i} $. 

In addition, for all the ranges of $i,j$ as above the cosets $g_{ij}\mr{H}$ are distinct. Indeed, let $i,j,k,l$ in the range as above such that $g_{ij}\mr{H}=g_{kl}\mr{H}$, \ie $t_{\cu}^{j} A_{i}^{-1}\mr{H}=t_{\cu}^{l} A_{k}^{-1}\mr{H}$, \ie $A_{k} t_{\cu}^{j-l} A_{i}^{-1} \in \mr{H}$\,. Then $ A_{k} t_{\cu}^{j-l} A_{i}^{-1}\cdot \cu_{i}= \cu_{k}$. Hence $\cu_{i}$ and $\cu_{k}$ are $\mr{H}$-equivalent cusps which implies that $i=k$. This implies that $A_{i} t_{\cu}^{j-l} A_{i}^{-1} \in \mr{H}_{\cu_{i}}$. $h_{i}$ is the smallest positive integer for which $A_{i} t_{\cu}^{h_{i}} A_{i}^{-1} \in \mr{H}$ and $0 \leq j, l< h_i$ therefore $0\leq |j-l| < h_i$ and $A_{i} t_{\cu}^{j-l} A_{i}^{-1} \in \mr{H}$ is possible only when $j-l=0$. This implies that $j=l$. Hence for $i,j,k,l$ ranged as above $g_{ij}\mr{H}=g_{kl}\mr{H}$ requires $i=k$ and $j=l$. Thus $\sum_{i=1}^{\n_{\cu}} h_{i}\leq m$ and we are done. \\
\qed

\begin{example}\rm

As an illustration of Lemma~\ref{corep}, Table~\ref{cuspindex} shows data for certain finite index subgroups $\mr{H}$ of $\rG=\G(1)$. In this case $\widehat{\Cu_{_{\rG}}}=\{ \infty \}$, $k_{\cu}=1$ and $\cu_{i} \in \widehat{\Cu_{_\mr{H}}}$, $h_i=h_{\cu_{i}}$.
\begin{table}[!htbp]
\centering
{\footnotesize
\begin{tabular}{|c|c|c|c|c|c|c|c|}
\hline $\mr{H}$ & $m$ &$\widehat{\Cu_{_\mr{H}}}$ & $h_{i}$ \\[2ex]
\hline\hline
$\G_0(2)$ & $3$ &$\{0, \infty\}$ &$2,1$ \\[2ex]
\hline
$\G(2)$& 6&$\{ 0,1, \infty\}$&$2,2,2$\\[2ex]
\hline
$\G_0(3)$ & $4$ &$\{0, \infty\}$ &$3,1$\\[2ex]
\hline
$\G(3)$& 12&$\{ -1, 0,1, \infty\}$&$3,3,3,3$\\[2ex]
\hline
$\G_0(4)$ & $6$ &$\{-\frac{1}{2}, 0, \infty\}$ &$1,4,1$ \\[2ex]
\hline
$\G(4)$ & $24$ &$\{-1, -\frac{1}{2}, 0, 1, 2, \infty\}$ &$4,4,4,4,4,4$\\[2ex]
\hline
$\G_{0}(8)$ &12&$\{-\frac{1}{4}, -\frac{1}{2}, 0, \infty\}$ &$1,2,8,1$  \\[2ex]
\hline 
\end{tabular}
} 
\vspace{3mm}
\caption{Relation between index and cusp widths of $\mr{H}$ in $\rG$.}\label{cuspindex}
\end{table}
\end{example}

\begin{proof}[Proof of Theorem~\ref{adm-ind}]  
We need to show that for each cusp $\cu$ of $\rG$, $\widetilde{\rho}(t_{\cu})$ is diagonalizable. Let $\cu$ be any cusp of $\rG$. Due to Lemma~\ref{cosetrep}, it is sufficient to choose the coset representatives as in Lemma~\ref{corep}. Then $\widetilde{\rho}(t_\cu)$ can be written in block form as 
\begin{equation}\label{eq:gij} 
\rho(g_{i j}^{-1} t_{\cu} g_{k l}) = \left\{\begin{array}{ccccccc}
 I\,, &\text{if } \ i=k \ \mr{and}\ j\ne h_{i} -1\\
 \\
 \rho(t_i)\,, & \text{if } \ i=k, \ j=0 \ \mr{and}\ l=h_{i} -1\\
 \\
 0\,,  &\text{otherwise} 
\end{array} \,\, , \right .
\end{equation}

where $i,j,k,l$ range as in Lemma~\ref{corep}, $I$ is the identity matrix of order $d\times d$ and $t_i = A_i t_{\cu}^{h_i} A_{i}^{-1}$ is the generator of the stabilizer $\mr{H}_{\cu_{i}}$ in $\mr{H}$. Thus $\widetilde{\rho}(t_\cu)$ is in block form, one for each $i$ of order $dh_{i} \times dh_{i}$.  Also, for every $1\leq i \leq\n_{\cu}$ $\rho(t_i):=T_{i}$ is diagonalizable by the admissibility hypothesis. So, for every $i$ let $v_{(i,k)}, 1\leq k \leq d$, be a basis of eigenvectors respectively with eigenvalues $\lambda_{(i,k)}$ of $\rho(t_i)$. Let $\zeta$ be any $h_{i}^{^{th}}$ root of unity and let $V_{(i,k,\zeta)}$ be the column vector of order $dm\times 1$, defined as follows. Its nonzero entries appear only in the $i^{th}$ block of order $dh_{i}\times 1$. That block is given by $\big(\lambda_{_{(i,k)}}^{^{1/h_{i}}} v_{(i,k)}, \zeta \lambda_{_{(i,k)}}^{^{1/h_{i}}} v_{(i,k)}, \zeta^{2}\lambda_{_{(i,k)}}^{^{1/h_{i}}} v_{(i,k)},\cdots, \zeta^{h_i -1} \lambda_{_{(i,k)}}^{^{1/h_{i}}} v_{(i,k)}\big)^{\mathfrak{t}} $.  From~\eqr{gij} it is clear that $V_{(i,k,\zeta)}$ is an eigenvector of $\widetilde{\rho}(t_{\cu})$ with eigenvalue $\zeta \lambda_{_{(i,k)}}^{^{1/h_i}}$. Hence, for every $ i $ there are exactly $dh_{i}$ eigenvectors of order $dm \times 1$ formed with respect to the $dh_{i}$ eigenvalues $\zeta \lambda_{_{(i,k)}}^{^{1/h_i}}$ for $\zeta=\exp\big(\frac{2\pi \Ii j}{h_i}\big)$ with $0\leq j < h_i$. Since $V_{_{(i,k ,\zeta)}}$ are linearly independent, $\widetilde{\rho}(t_\cu)$ is indeed diagonalizable. 
\end{proof}

Recall from the definition of admissible multiplier system that the exponent $\Lambda_{\cu}$ for any cusp $\cu$ of $\rG$ is a diagonal matrix such that $\mc{P}_{\cu}^{-1} \rho(t_{\cu}) \mc{P}_{\cu}=\exp(2\pi \Ii \Lambda_{\cu})$ for some diagonalizing matrix $\mc{P}_{\cu}$.
\begin{cor}\label{Omega}
For any cusp $\cu$ of $\rG$ an exponent $\Omega_{\cu}$ of the induced representation $\mr{Ind}_{_{\mr{H}}}^{^{\rG}}(\rho)$, of a rank $d$ admissible representation $\rho$ of $\mr{H}$,  has components $\frac{(\Lambda_{_{i}})_{kk}+j}{h_{i}}$, where $1\leq i \leq \n_{\cu}$, $0\leq j < h_{i}$, $1\leq k\leq d$ and $\Lambda_{i}$ is an exponent of $\rho$ at cusp $\cu_{i}$.
\end{cor}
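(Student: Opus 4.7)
The plan is to read off the exponent directly from the eigenvalue computation carried out in the proof of Theorem~\ref{adm-ind}. There, with the coset representatives $g_{ij}=t_\cu^{\,j}A_i^{-1}$ from Lemma~\ref{corep}, an explicit eigenbasis $\{V_{(i,k,\zeta)}\}$ of $\widetilde\rho(t_\cu)$ was exhibited, and the corresponding eigenvalues were shown to be
\[
\zeta\,\lambda_{(i,k)}^{1/h_i},\qquad 1\le i\le \n_\cu,\ 1\le k\le d,\ \zeta^{h_i}=1,
\]
where $\lambda_{(i,k)}$ runs over the eigenvalues of $\rho(t_i)=\rho(A_i t_\cu^{h_i}A_i^{-1})$. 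Since $\widetilde\rho(t_\cu)$ is diagonalizable, an exponent $\Omega_\cu$ is any diagonal matrix of order $dm$ satisfying $\exp(2\pi\Ii\,\Omega_\cu)$ equal to the diagonalization of $\widetilde\rho(t_\cu)$, so it suffices to take logarithms of these $dm$ eigenvalues (note the count matches, since $\sum_i dh_i=dm$ by Lemma~\ref{corep}).

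First I would unpack the admissibility hypothesis on $\rho$ at the cusp $\cu_i$\,: there is $\mc{P}_{\cu_i}\in\mr{GL}_d(\C)$ with $\mc{P}_{\cu_i}^{-1}\rho(t_i)\mc{P}_{\cu_i}=\exp(2\pi\Ii\,\Lambda_i)$, so the eigenvalues of $\rho(t_i)$ are exactly $\lambda_{(i,k)}=\exp\bigl(2\pi\Ii\,(\Lambda_i)_{kk}\bigr)$ for $1\le k\le d$. Second, fixing the principal $h_i$-th root $\lambda_{(i,k)}^{1/h_i}=\exp\bigl(2\pi\Ii\,(\Lambda_i)_{kk}/h_i\bigr)$ and writing $\zeta=\exp(2\pi\Ii\,j/h_i)$ with $0\le j<h_i$, the eigenvalues of $\widetilde\rho(t_\cu)$ rewrite as
\[
\zeta\,\lambda_{(i,k)}^{1/h_i}=\exp\!\Bigl(2\pi\Ii\,\tfrac{(\Lambda_i)_{kk}+j}{h_i}\Bigr).
\]
Therefore $\Omega_\cu$ can be taken to be the diagonal matrix whose entries are $\tfrac{(\Lambda_i)_{kk}+j}{h_i}$ as $(i,j,k)$ range through $1\le i\le\n_\cu$, $0\le j<h_i$, $1\le k\le d$, exactly as claimed.

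The only subtle point is that an exponent is only defined modulo $\Z$ per diagonal entry; the formula above will in general produce values outside $[0,1)$ (for example, when $(\Lambda_i)_{kk}+j\ge h_i$), but the normalization convention from the note after Definition~\ref{multiplier} just shifts each such entry by an integer and does not affect the identity $\exp(2\pi\Ii\,\Omega_\cu)=\mr{diag}(\widetilde\rho(t_\cu))$. The main bookkeeping issue—hence the only mildly nontrivial step—is matching the $h_i$-th root convention consistently across all $(i,k)$; any fixed branch works, and the freedom in choosing it is absorbed into the integer ambiguity of the exponent.
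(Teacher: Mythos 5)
Your proposal is correct and follows essentially the same route as the paper's proof: both read off the eigenvalues $\zeta\,\lambda_{(i,k)}^{1/h_i}$ of $\widetilde\rho(t_\cu)$ from Theorem~\ref{adm-ind}, use admissibility at each $\cu_i$ to write $\lambda_{(i,k)}=\exp\bigl(2\pi\Ii\,(\Lambda_i)_{kk}\bigr)$, and combine to get the entries $\frac{(\Lambda_i)_{kk}+j}{h_i}$. Your added remarks on the count $\sum_i dh_i=dm$, the choice of branch of the $h_i$-th root, and the integer ambiguity of exponents are sensible clarifications of points the paper leaves implicit, but they do not change the argument.
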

\begin{proof} From the proof of Theorem~\ref{adm-ind}, for every $\cu \in \widehat{\Cu_{_{\rG}}}$, $\widetilde{\rho}(t_{\cu})$ is a diagonalizable matrix with the eigenvalues $$\bigg\{ \xi \lambda_{_{(i,k)}}^{^{1/h_{i}}} \ \big|\ 1\leq i \leq \n_{\cu}, 1\leq k \leq d, \mr{and}\ \xi=\exp\bigg(\frac{2\pi \Ii j}{h_{i}}\bigg), 1\leq j < h_{i} \bigg\}\,.$$  For all $i$ there exists a diagonalizing matrix  $\mc{P}_{i}$ such that $\mc{P}_{i}^{-1} \rho(t_{i}) \mc{P}_{i} = \exp(2\pi \Ii \Lambda_{i})$, where  the exponent matrix $\Lambda_{i}=\mr{Diag}(\Lambda_{i1}, \ldots, \Lambda_{id})$. Therefore,  $\lambda_{_{(i,k)}}= \exp(2\pi \Ii \Lambda_{_{ik}})$. This implies that $\xi \lambda_{_{(i,k)}}^{^{1/h_{i}}} = \exp\bigg(2 \pi \Ii\frac{\Lambda_{_{ik}}+j}{h_{i}}\bigg)$. Hence the exponent $\Omega_{\cu}$ of $\widetilde{\rho}(t_{\cu})$ has $dm$ diagonal entries of the form $\frac{(\Lambda_{_{i}})_{kk}+j}{h_{i}}$. 
\end{proof}

A formal proof of Theorem~\ref{adm-ind} in complete generality made the argument more complicated than it really is. Thus, this simple idea will be illustrated with an example in section~\ref{examples}. 

\begin{proof}[Proof of Theorem~\ref{induction}] $\widetilde\rho : \rG \rightarrow \mr{GL}_{dm}(\C)$ is an induced representation of $\rG$ of an admissible representation $\rho: \mr{H} \rightarrow \mr{GL}_{d}(\C)$ of $\mr{H}$. For any representation $\rho: \mr{H} \rightarrow \mr{GL}_d(\C)$ we wish to find  an isomorphism between $\m_{w}^{!}(\rho) $ and $\m_{w}^{!}(\widetilde\rho)$. Lemma~\ref{weight0 relation} gives $\m_{w}^{!}(\rho) \approx \m_{0}^{!}(\rho \otimes \nu_{_{\mr{H}}}^{^{-w}})$ using the isomorphism $\X(\tau) \mapsto \Delta_{_\rG}^{^{-\frac{w}{2\fL}}} \X(\tau)$, where $\nu_{_{\mr{H}}}^{^{-w}}$ is the restriction of $\nu_{_{\rG}}^{^{-w}}$ to $\mr{H}$. Similarly, $\m_{w}^{!}(\widetilde\rho) \approx \m_{0}^{!}(\widetilde\rho \otimes \nu_{_{\rG}}^{^{-w}})$. Therefore to show a one-to-one correspondence between $\m_{w}^{!}(\rho)$ and $\m_{w}^{!}(\widetilde\rho)$ it is enough to establish a one-to-one correspondence between $\m_{0}^{!}(\rho \otimes \nu_{_{\mr{H}}}^{^{-w}})$ and $\m_{0}^{!}(\widetilde\rho \otimes \nu_{_{\rG}}^{^{-w}})$. Note that $\mr{Ind}_{_{\mr{H}}}^{^{\rG}}(\rho \otimes \nu_{_{\mr{H}}}^{^{-w}}) = \mr{Ind}_{_{\mr{H}}}^{^{\rG}}(\rho) \otimes \nu_{_{\rG}}^{^{-w}}$.  
Let $\X(\tau) \in \m_{0}^{!}(\rho)$ then define $$\widetilde{\X}(\tau) =\bigg(\X(\g_{1}^{-1}\tau), \X(\g_{2}^{-1}\tau), \ldots, \X(\g_{m}^{-1}\tau)\bigg)^{\mathfrak{t}}.$$ We claim that $\widetilde{\X}(\tau) \in \m_{0}^{!}(\widetilde\rho^{\ '})$. Since every component is weakly holomorphic therefore $\widetilde{\X}(\tau)$ is also weakly holomorphic. Hence it suffices to check the functional behaviour of $\widetilde{\X}(\tau)$ under $\rG$\,, i.e. for all $\g=\pm\tmt{a}{b}{c}{d} \in \rG, \ \widetilde{\X}(\g \tau) = \widetilde\rho(\g)\widetilde{\X}(\tau).$ Consider $\widetilde{\X}(\g\tau)$ for $\g \in \rG$, then by definition 
\begin{eqnarray*} \footnotesize
\widetilde{\X}(\g\tau) = \left(\begin{array}{ccc}
\X(\g_{1}^{-1}\g\tau)\\
\X(\g_{2}^{-1}\g\tau)\\ 
\vdots\\ 
\X(\g_{m}^{-1}\g\tau)\\
\end{array} \right) 
 = \left( \begin{array}{ccc}
\X(\g_{1}^{-1}\g\g_{j_{1}}\g_{j_{1}}^{-1}\tau)\\
\X(\g_{2}^{-1}\g \g_{j_{2}}\g_{j_{2}}^{-1}\tau) \\
\vdots\\
\X(\g_{m}^{-1}\g \g_{j_{m}}\g_{j_{m}}^{-1}\tau)\\
\end{array} \right) =  \left( \begin{array}{ccc}
\rho(\g_{1}^{-1}\g \g_{j_{1}}) \X(\g_{j_{1}}^{-1}\tau) \\
\rho(\g_{2}^{-1}\g \g_{j_{2}}) \X(\g_{j_{2}}^{-1}\tau) \\
\vdots\\
\rho(\g_{m}^{-1}\g \g_{j_{m}}) \X(\g_{j_{m}}^{-1}\tau) \\
\end{array} \right) \,.\end{eqnarray*} This implies that $\widetilde{\X}(\tau) \in \m_{0}^{!}(\widetilde\rho)$. Conversely, for any $\widetilde{\X}(\tau) \in \m_{0}^{!}(\widetilde\rho)$ define $\X(\tau)$ by taking the first $d$ components of $\widetilde{\X}(\tau)$, \ie $\X(\tau)=\bigg(\widetilde{\X}_{1}(\tau), \ldots, \widetilde{\X}_{d}(\tau) \bigg)^{\mathfrak{t}}$. Since $\g_1=1$ therefore for every $\g \in \mr{H}$, $\rho(\g)$ will appear as the first $d\times d$ block in the $dm\times dm$ matrix $\tilde{\rho}(\g)$ such that all the other entries in the first row and column are zeros and first $d$ components on both sides of $\widetilde{X}(\g\tau)=\widetilde{\rho}(\g) \widetilde{X}(\tau),$ for every $\g \in \mr{H}$ give the required identity $\X(\g \tau)=\rho(\g) \X(\tau),$ for every $\g \in \mr{H}$. To see whether thus defined $\X(\tau)$ will have Fourier expansion at every cusp of H, first notice that $\widehat{\Cu}_{_\mr{H}}=\{ \g_{j}^{-1} \cu\ |\ \cu \in \widehat{\Cu}_{_\rG}\ \&\ 1\leq j \leq m \}$ and $\widehat{\Cu}_{_\rG} \subset \widehat{\Cu}_{_\mr{H}}$. Since for all $j, \g_{j} \notin \mr{H}$ we obtain $$\X(\g_{j}\tau) = \bigg(\widetilde{\X}_{1}(\g_{j}\tau), \widetilde{\X}_{2}(\g_{j}\tau), \ldots, \widetilde{\X}_{d}(\g_{j}\tau) \bigg)^{\mathfrak{t}}.$$
Since for any $\cu \in \widehat{\Cu_{_\rG}}$, $\cu$ and $\g_{j}^{-1}\cu$  are $\rG$-equivalent cusps, every component of $\X(\g_{j} \tau)$ inherits the Fourier expansion at cusp $\cu$ from the Fourier expansion of $\widetilde{\X}(\g_{j} \tau)$. Hence, $\widetilde{\X}(\tau)$ is a weakly holomorphic vvmf and has Fourier expansion at every cusp of $\rG$.  
\end{proof}

\section{Existence}
\begin{thm}\label{thm:existence-for-fuchsian}
Let $\rG$ be a Fuchsian group of the first kind and $\rho:\rG \rightarrow \mr{GL}_d(\C)$ be any admissible representation of finite image. Then there exists a weakly holomorphic vector-valued modular function for $\rG$ with multiplier $\rho$, whose components are linearly independent over $\C$. 
\end{thm}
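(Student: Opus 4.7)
Since $\rho$ has finite image, $\mr{H}:=\ker(\rho)$ is a normal subgroup of $\rG$ of finite index $m:=[\rG:\mr{H}]$. Fix coset representatives $\g_{1}=1,\g_{2},\ldots,\g_{m}$ of $\mr{H}$ in $\rG$. The plan is to manufacture a vvmf in $\m_{0}^{!}(\rho)$ by an explicit Poincar\'e-type averaging built from scalar-valued modular functions for $\mr{H}$, and then to amplify it so that the components are linearly independent over $\C$. Theorem~\ref{adm-ind} ensures that $\widetilde{\mathbf{1}}:=\mr{Ind}_{_{\mr{H}}}^{^{\rG}}(\mathbf{1})$ is admissible, and Theorem~\ref{induction} yields an $R_{_{\rG}}$-module isomorphism $\m_{0}^{!}(\mr{H})\cong\m_{0}^{!}(\widetilde{\mathbf{1}})$. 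Since $\mr{H}\backslash\h_{_{\mr{H}}}^{*}$ is a compact Riemann surface with finitely many punctures, the $\C$-algebra $\m_{0}^{!}(\mr{H})$ is infinite-dimensional, so $\m_{0}^{!}(\widetilde{\mathbf{1}})\neq 0$.

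Viewed as a representation of the finite group $\rG/\mr{H}$, the multiplier $\widetilde{\mathbf{1}}$ is the regular representation, while $\rho$ descends to a faithful representation of $\rG/\mr{H}$. By Frobenius reciprocity,
\[
\mathrm{Hom}_{\rG}(\widetilde{\mathbf{1}},\rho)\;=\;\mathrm{Hom}_{\mr{H}}\big(\mathbf{1},\mathrm{Res}_{\mr{H}}\rho\big)\;\cong\;V_{\rho},
\]
because $\mr{H}$ acts trivially on $V_{\rho}=\C^{d}$. Concretely, for each $v\in V_{\rho}$ the linear map $\Phi_{v}\colon\widetilde{\mathbf{1}}\to\rho$ defined by $e_{j}\mapsto\rho(\g_{j})v$ (with $e_{j}$ the basis vector corresponding to the coset $\g_{j}\mr{H}$) is $\rG$-equivariant, and postcomposing the lift provided by Theorem~\ref{induction} with $\Phi_{v}$ produces
\[
\X_{f,v}(\tau)\;:=\;\sum_{j=1}^{m} f(\g_{j}^{-1}\tau)\,\rho(\g_{j})v\;\in\;\m_{0}^{!}(\rho)
\]
for every $f\in\m_{0}^{!}(\mr{H})$ and $v\in V_{\rho}$. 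Each summand depends only on the coset $\g_{j}\mr{H}$ because $\mr{H}\subseteq\ker\rho$ and $f$ is $\mr{H}$-invariant, the functional equation $\X_{f,v}(g\tau)=\rho(g)\X_{f,v}(\tau)$ comes from the permutation of cosets by left multiplication combined with the cancellation of the $\mr{H}$-part in both the $f$ and $\rho$ factors, and the required cuspidal Fourier expansions are inherited from the admissibility of $\widetilde{\mathbf{1}}$ and of $\rho$.

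To force linear independence of components, I would select vectors $v_{1},\ldots,v_{L}\in V_{\rho}$ whose combined orbits $\bigcup_{s}\rho(\rG)v_{s}$ span $V_{\rho}$ (any basis of $V_{\rho}$ works, giving $L=d$), together with scalar modular functions $f_{1},\ldots,f_{L}\in\m_{0}^{!}(\mr{H})$ such that the $Lm$ translates $\{f_{s}(\g_{j}^{-1}\tau)\}_{s,j}$ are linearly independent over $\C$, and set $\X:=\sum_{s=1}^{L}\X_{f_{s},v_{s}}\in\m_{0}^{!}(\rho)$. If $c\in\C^{d}$ satisfies $c^{\mathfrak{t}}\X\equiv 0$, then $\sum_{s,j}\big(c^{\mathfrak{t}}\rho(\g_{j})v_{s}\big)f_{s}(\g_{j}^{-1}\tau)\equiv 0$, forcing $c^{\mathfrak{t}}\rho(\g_{j})v_{s}=0$ for every $s$ and $j$ by the independence hypothesis. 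Hence $c$ is orthogonal to the span of $\bigcup_{s}\rho(\rG)v_{s}=V_{\rho}$, so $c=0$ and the components are indeed $\C$-linearly independent.

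The principal obstacle is producing the $f_{s}$ with the stated property. Since $\mr{H}$ is normal in $\rG$, the field $F:=\m_{0}^{!}(\mr{H})$ is a Galois extension of $R_{_{\rG}}=\m_{0}^{!}(\rG)$ of degree $m$, with Galois group $\rG/\mr{H}$ acting by $f\mapsto f\circ\g_{j}^{-1}$. For any primitive element $f$ of this extension, the $m$ Galois conjugates $\{f\circ\g_{j}^{-1}\}_{j=1}^{m}$ are linearly independent over $R_{_{\rG}}$, hence over $\C$. Because $F$ is infinite-dimensional over $\C$, one can iteratively enlarge the family, choosing at each step a new $f_{s}\in F$ whose $m$ translates avoid the $\C$-span of those already collected, ultimately producing the desired $Lm$-element linearly independent set. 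This elementary but fiddly dimension count is the most delicate step; the remainder of the argument is direct verification.
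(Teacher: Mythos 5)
Your construction is essentially correct but takes a genuinely different route from the paper. The paper sets $\mr{H}=\ker(\rho)$, decomposes $\rho$ into irreducibles of the finite group $\mr{K}=\rG/\mr{H}$, produces one vvmf per irreducible by projecting the induced regular representation $\mr{Ind}_{_{\mr{H}}}^{^{\rG}}(\mathbf{1})$ (applied to a single scalar function $g$ whose translates $g(\g_j\tau)$ are independent) onto a subrepresentation equivalent to that irreducible, and then reassembles all of $\rho$ by weighting the pieces with powers $f,f^2,\dots$ of a nonconstant $\rG$-modular function. You bypass the irreducible decomposition entirely: Frobenius reciprocity gives explicit intertwiners $\Phi_v$ from the regular representation to $\rho$, yielding the averaged forms $\X_{f,v}(\tau)=\sum_j f(\g_j^{-1}\tau)\rho(\g_j)v$, and linear independence is read off directly from the coefficients $c^{\mathfrak{t}}\rho(\g_j)v_s$ against an independent family of scalar functions. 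This is arguably cleaner: no change of basis to block-diagonal form, no appeal to the structure of the regular representation, and your equivariance computation is correct since $\g_j^{-1}g=h\,\g_{\sigma(j)}^{-1}$ with $h\in\mr{H}=\ker\rho$ cancels in both the $f$-factor and the $\rho$-factor.

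Two points in your last paragraph need repair. First, it is false that \emph{any} primitive element of a Galois extension has linearly independent conjugates: $\sqrt{2}$ generates $\Q(\sqrt{2})/\Q$, yet its conjugates $\pm\sqrt{2}$ are dependent over $\Q$. What you need is the normal basis theorem (\emph{some} element whose conjugates form a basis), or the paper's elementary device: choose $\tau_0$ with the points $\g_j\tau_0$ distinct and $f$ separating them, and set $g(\tau)=\prod_j\big(f(\g_j\tau)-f(\g_j\tau_0)\big)^j$, whose translates have distinct orders of vanishing at $\tau_0$ and are therefore independent over $\C$. Second, the iterative enlargement to $Lm$ jointly independent translates is asserted rather than proved: picking $f_s$ whose translates ``avoid the span'' of the earlier ones does not make the $m$ new translates independent of one another modulo that span. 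A clean fix is to take $f_s=u^{s}g$ for a nonconstant $\rG$-modular function $u$ and a $g$ whose translates are independent over the field of $\rG$-modular functions (normal basis theorem); then $u(\g_j^{-1}\tau)=u(\tau)$, and a relation $\sum_{s,j}c_{sj}u^{s}\,g(\g_j^{-1}\tau)=0$ forces each polynomial $\sum_s c_{sj}u^{s}$ to vanish, hence all $c_{sj}=0$. With either repair the argument closes, and the resulting proof is a legitimate alternative to the one in the paper.
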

\begin{proof}
First, note that if $f(z)$ is any nonconstant function holomorphic in some disc then the powers $f(z)^1,f(z)^2,...$ are linearly independent over $\C$. To see this let $z_0$ be in the disc; it suffices to prove this for the powers of $g(z)=f(z)-f(z_0)$, but this is clear from Taylor series expansion of $g(z)=\sum_{n=k}^\infty (z-z_0)^n a_n$ where $a_k \ne 0$ ($k$ is the order of the zero at $z=z_0$). In particular, if $f(z)$ is any nonconstant modular function for any Fuchsian group of the first kind then its powers are linearly independent over $\C$.

Moreover, suppose  $\rG,\mr{H}$ are distinct Fuchsian groups of the first kind with $\mr{H}$ normal in G with index m. Fix any $\tau_0\in \h\backslash\{\mc{E}_{\mr{G}}\}$ such that all $m$ points $\gamma_i\tau_0$ are distinct where $\gamma_i$ are $m$ inequivalent coset representatives. Then there is a modular function $f(\tau)$ for $\mr{H}$ such that  the $m$ points $f(\gamma_i\tau_0)$ are distinct. This is because distinct Fuchsian groups must have distinct sets of modular functions. Define \begin{equation}\label{eq:gtau} g(\tau)=\prod_i(f(\gamma_i\tau)-f(\gamma_i\tau_0))^i . \end{equation} Then $g(\tau)$ is also a modular function for $\mr{H}$, and manifestly the $m$ functions $g(\gamma_i\tau)$ are linearly independent over $\C$ (since they have different orders of vanishing at $\tau_0$).

Let $\mr{H}=\ker(\rho)$. Then $\rho$ defines a representation of the finite group $\mr{K}=\rG/\mr{H}$, so $\rho$ decomposes into a direct sum $\oplus_im_i\rho_i$ of irreducible representations $\rho_i$ of K, where $m_i$ is the multiplicity of  irreducible representation $\rho_{i}$ of K in $\rho$. 

Suppose that the Theorem is true for all  irreducible representations $\rho_i$ of K. Let $$\X_i(\tau)=\bigg(\X_{i1}(\tau),\cdots,
\X_{id_i}(\tau)\bigg)^{\mathfrak{t}}$$ be a vvmf for the $i^{th}$-irreducible representation of K with linearly independent components. Changing basis, $\rho$ can be written in the block-diagonal form ($m_i$ blocks for each $\rho_i$). Choose any nonconstant modular function $f(\tau)$ of G. Then $$\X(\tau)=\bigg(f(\tau)\X_1(\tau),\cdots,f(\tau)^{m_1}\X_1(\tau),f(\tau)\X_2(\tau),\cdots,f(\tau)^{m_2}\X_{2}(\tau),\cdots \cdots\bigg)^{\mathfrak{t}}$$ will be a vvmf for G with multiplier $\rho$ (or rather $\rho$ written in block-diagonal form), and the components of $\X(\tau)$ will be linearly independent over $\C$. 

So it suffices to prove the theorem for  irreducible representations of K. Let $m=[\rG:\mr{H}]=|\mr{K}|$ and write $\rG=\g_1 \mr{H}\cup \g_2 \mr{H} \cup \cdots \cup \g_m \mr{H}.$ Let $g(\tau)$ be the modular function for $\mr{H}$ defined above by equation~\eqr{gtau}, which is such that the $m$ functions $g(\g_i\tau)$ are linearly independent over $\C$. Induce $g(\tau)$ (which transforms by the trivial $\mr{H}$-representation)
from $\mr{H}$ to a vvmf $\X_{_{g}}(\tau)$ of G; by definition its $m$ components $\X_{_{g,i}}(\tau)=g(\g_i \tau)$ are linearly independent over $\C$. Inducing the trivial representation of $\mr{H}$ gives the regular representation of K and the regular representation of a finite group (such as K) contains each  irreducible representation (in fact with a multiplicity equal to the dimension of the  irreducible representation). To find a vvmf for the K-irreducible representation $\rho_j$ find a subrepresentation of regular representation equivalent to $\sigma_j$ for some $j$ in $\sigma:=\mr{Ind}_{_{\mr{H}}}^{^{\rG}}(1) = \oplus_{j} m_{j} \sigma_{j}.$ Observe that every $\X(\tau)$  of $\m^{{!}}_{0}(\sigma)$ can be realized as $\X_{g}(\tau)$ for some $g(\tau)$ of $\m^{{!}}_{0}(1)$ where $1$ denotes the trivial representation of $\mr{H}$. Let $P$ be a change of basis matrix such that $P^{-1} \sigma P = \delta \oplus \delta'$ with $\delta = \sigma_{j}$, and $\sigma_{j}$  appearing as first $d_{j } \times d_{j}$ block matrix in the matrix $P^{-1} \sigma P $. Here $P$ can be realized as a block permutation matrix of the summands $\sigma_{j}$'s of $\sigma$. Now,  construct $\X_{j}(\tau)$ by the first $d_j$ components of $P\X(\tau)$ which is the desired vvmf of $\rho_j$. By construction all the components of $\X_{j}(\tau)$ will be linearly independent over $\C$.
\end{proof}

\section{Examples}\label{examples}
Let us fix $t=\pm \tmt{1}{1}{0}{1}, s=\pm \tmt{0}{1}{-1}{0} $ and $u= st^{-1}=\pm \tmt{0}{-1}{1}{-1}.$  The matrices $t,s$ and $u$ are of order $\infty$, $2$ and $3$ respectively.  Write
 \begin{eqnarray*}
\rG \hspace{-0.4cm}&:=\G(1) \hspace{-0.2cm} &\cong \big\langle t, s, u \ \big| \   s^2 =1=u^3 =tsu \big\rangle, \\
\mr{H} \hspace{-0.4cm}&\,\,\, := \G_{0}(2) \hspace{-0.2cm}&\cong  \big\langle t_{\infty}:=t, t_0 :=s t^2 s, t_{\omega}\ \big| \ t_{\omega}^{2}=1=t_{\omega} t_{0} t_{\infty} \big\rangle, \\
\mr{K} \hspace{-0.4cm}& :=\G(2) \hspace{-0.2cm}&\cong  \big\langle t_\infty:=t^{2}, t_0:=s t^2 s, t_{1}:=t s t^2 s t^{-1} \ \big| \  t_{1} t_{\infty}t_{0} =1\big\rangle, 
\end{eqnarray*}
where  $t_{\omega}=\pm\tmt{\ 1}{\ 1}{-2}{\ -1}$ and $\om=\frac{-1+i}{2}$ is an elliptic fixed point of order $2$.  $\mr{H}$ and $\mr{K}$ both are congruence subgroups of $\rG$ of index 3 and 6 respectively and recall that  $\widehat{\Cu}_{_\rG}=\{ \infty \}, \ \widehat{\Cu}_{_\mr{H}}=\{ \infty, 0 \}, \ \widehat{\Cu}_{_\mr{K}}=\{ \infty, 0, 1 \}.$ 
\subsection{Exponent matrix of a lift}\label{exponent matrix of a lift}
\begin{itemize} 
\item Since $[\rG:\mr{K}]=6$ write $\rG= \mr{K} \cup t\mr{K} \cup s \mr{K} \cup ts \mr{K} \cup st\mr{K} \cup tst\mr{K}$.  Let $\rho:\mr{K} \rightarrow \mr{GL}_{d}(\C)$ be any admissible representation  and write $\rho(t_{\cu})=T_{\cu}$ for the cusp $\cu=1,0,\infty$, then there exist exponent matrices $\Lambda, \Lambda_0, \Lambda_{1}$ and matrices $\mc{P}_0, \mc{P}_{1} \in \mr{GL}_{d}(\C)$ such that $T_\infty=\exp(2\pi \Ii \Lambda)$, $\mc{P}_{0} T_{0} \mc{P}_{0}^{-1}=\exp(2\pi \Ii \Lambda_{0})$, $\mc{P}_{1} T_{1} \mc{P}_{1}^{-1}=\exp(2\pi \Ii \Lambda_{_{1}})$ are diagonal matrices, where $\Lambda,\Lambda_{0},$ and $\Lambda_{_{1}}$ are  exponent matrices of cusps $\infty, 0$ and $1$ respectively. Define $\widehat{\rho}=\mr{Ind}_{_{\mr{K}}}^{^{\rG}} (\rho):\mr{G} \rightarrow \mr{GL}_{6d}(\C) $, then  from equation~\eqr{indrep}  \begin{equation*}  \widehat{\rho}(t) :=\widehat{T}_{\infty} = \left( \begin{array}{cccccccc}
0 &T_{\infty}& 0&0 &0& 0\\
I&0&0&0&0&0 \\
0&0&0&T_{0}&0&0\\
0&0&I&0&0&0\\
0&0&0&0&0&T_{_{1}}\\
0&0&0&0&I&0
\end{array} \right)\,.  \end{equation*} To assure the admissibility of $\widehat{\rho}$ we need to show that  $\widehat{\rho}(t)$ is diagonalizable, and   this follows from Theorem~\ref{adm-ind}. From Corollary~\ref{Omega}, the exponent matrix of cusp $\infty$ of $\rG$ with respect to $\widehat{\rho}$ is $$\Omega= \mr{Diag}\bigg( \frac{\Lambda}{2}, \frac{1+\Lambda}{2}, \frac{\Lambda_{0}}{2},\frac{1+\Lambda_{0}}{2}, \frac{\Lambda_{1}}{2}, \frac{1+\Lambda_{_{1}}}{2}\bigg).$$  

\item Since $[\rG:\mr{H}]=3$ write $\rG= \mr{H} \cup s\mr{H} \cup ts \mr{H} .$  Let $\rho:\mr{H} \rightarrow \mr{GL}_{d}(\C)$ be any admissible representation. From the definition of induced representation  $\widetilde{\rho}=\mr{Ind}_{_{\mr{H}}}^{^{\rG}} (\rho) :\rG \rightarrow \mr{GL}_{3d}(\C)$ defined by the equation~\eqr{indrep} \begin{equation*} \widetilde{\rho}(t) :=\widetilde{T}_{\infty} = \left( \begin{array}{cccccccc}
T_{\infty}& 0&0\\
0&0&T_{0} \\
0&I&0
\end{array} \right)\, .\end{equation*} From Theorem~\ref{adm-ind}, it follows that $\widetilde{\rho}(t)$ is diagonalizable and from Corollary~\ref{Omega}, the exponent matrix of cusp $\infty$ of $\rG$ with respect to the admissible representation $\widehat{\rho}$ is $$\Omega= \mr{Diag}\bigg( \Lambda, \frac{\Lambda_{0}}{2}, \frac{1+\Lambda_{0}}{2} \bigg)\, $$

\end{itemize}

\subsection{An easy construction of vvmf}\label{G2andG02}
An explaination of the above ideas is provided by constructing a rank two and three vvmf of $\G_{0}(2)$ and $\G(1)$. Write $\mr{H}=\mr{K} \cup t\mr{K}$.  Let $\sigma: \mr{K}\longrightarrow \C^{\times}$ be a trivial multiplier of $\mr{K}$. Consider $\widetilde\rho=\mr{Ind}_{_{\mr{K}}}^{^{\mr{H}}}(\sigma): \mr{H}\rightarrow \mr{GL}_{2}(\C)$ to be the induced  representation of $\mr{H}$. Therefore, by equation~\eqr{indrep} we get $\widetilde{T}_{\infty}=\widetilde{\rho}(t)= \left({0 \atop 1}{ 1\atop 0 } \right)$ and $\widetilde{T}_{0} =\widetilde{\rho}(t_{0})=\left( {1 \atop 0} { 0 \atop 1 }  \right).$ Let $\z_{_\mr{K}}(\tau)=-\frac{1}{16}\big( \tq^{^{\ -1}} -8+20 \tq -62\tq^{^{3}}+216 \tq^{^{5}}+ \cdots \big)$ with $q=\exp(2\pi \Ii \tau)$ and $\tq=q^{1/2}$, be a hauptmodul of $\mr{K}$ which sends the cusps $\infty$, $0$ and $1$  respectively to $\infty$, $0$ and $1$. Consider $\X(\tau)=\z_{_\mr{K}}(\tau)$, a weight zero scalar-valued modular form of $\mr{K}$, then $$\widetilde{\X}(\tau)=\bigg( \X(\tau), \X(t^{-1}\tau) \bigg)^{\mathfrak{t}}$$ is a weight zero rank two vvmf of $\rG$ with respect to an equivalent admissible multiplier $\widetilde{\rho}\ '=\mc{P}^{-1}\widetilde{\rho}\ \mc{P}$ of $\widetilde\rho$, where $\mc{P} =\tmt{1}{\ -1}{1}{\ \ 1}$ and the exponent matrix $\Omega$  of $\widetilde{\rho}\ '$ is $\tmt{1/2}{0}{0}{1}$. 
\par Similarly, write $\rG=\mr{H} \cup s\mr{H} \cup (t s)\mr{H}$. Let $\z_{_{\mr{H}}}(\tau)=-\frac{1}{64}\big( q^{-1}-24+276q-2048q^{2}+\cdots\big)$ be a hauptmodul of H which takes the values $\infty$, $0$ and $1$ respectively at $\infty$, $0$ and $\omega$. Let $\mathfrak{1}:\mr{H}\rightarrow \C^{\times}$ be the trivial multiplier of H and $\widehat{\rho}=\mr{Ind}_{_{\mr{H}}}^{^{\rG}}(\mathfrak{1}):\rG\rightarrow \mr{GL}_3(\C)$ be the induced representation of $\rG$, then for $\X(\tau)=\z_{_{\mr{H}}}(\tau)$,
 $$\widetilde{\X}(\tau)=\bigg(\X(\tau), \X(s^{-1}\tau), \X((t s)^{-1}\tau) \bigg)^{\mathfrak{t}}$$ is a weight zero rank three vvmf of $\G(1)$ with respect to an equivalent admissible multiplier $\widehat{\rho}\ '$ of $\widehat{\rho}$  where following equation~\eqr{indrep}, $$ \widehat{T}_{\infty}=\widehat{\rho}(t)=\left( \begin{array}{cccc}
\mathfrak{1}(t) &0&0\\
0&0&\mathfrak{1}(t_{0})\\
0&\mathfrak{1}(1)&0\\
\end{array} \right)=\left( \begin{array}{cccc}
1 &0&0\\
0&0&1\\
0&1&0\\
\end{array} \right)\,$$ 
and therefore $\widehat{\rho}\ '= \mc{P}^{-1} \widehat{\rho}\ \mc{P}$ with
$\mc{P}=\left( \begin{array}{cccc}
1 &0&0\\
0&1&-1\\
0&1&1
\end{array} \right).$
In this case, an exponent matrix $\Omega$ of $\widehat{\rho}\ '$ is the diagonal matrix $\mr{Diag}(1,1,1/2).$
%%%%%%%%%%%%%%%%%%%%%%%%%%
\section*{Acknowledgements}
The author wants to thank the Max Planck Institute for Mathematics in Bonn and 
Mathematics Institute of Georg-August University G\"ottingen  for providing a rich and stimulating environment to pursue research. This article is based on a chapter of the author's Ph.D thesis written at the University of Alberta, Canada. The author wishes to thank Terry Gannon for his guidance throughout and many valuable comments during the preparation of this article. 
%%%%%%%%%% BIBLIOGRAPHY %%%%%%%%%%%%%%%%%%%%%%%
\nocite{}

 \bibliographystyle{abbrv}
 \bibliography{induction}
\end{document}